\newcommand{\Q}{\ensuremath{\mathbb{Q}}}
\newcommand{\N}{\ensuremath{\mathbb{N}}}
\newcommand{\C}{\ensuremath{\mathbb{C}}}
\renewcommand{\P}{\ensuremath{\mathbb{P}}}
\newcommand{\A}{\ensuremath{\mathcal{A}}}
\newcommand{\V}{\ensuremath{\mathcal{V}}}
\newcommand{\E}{\ensuremath{\mathcal{E}}}
\DeclareMathOperator{\Ima}{Im}
\DeclareMathOperator{\rk}{rk}				
\DeclareMathOperator{\In}{In}				
\DeclareMathOperator{\nbc}{nbc}
\DeclareMathOperator{\Ext}{Ext}
\DeclareMathOperator{\Hom}{Hom}
\DeclareMathOperator{\gr}{gr}
\newtheorem{lemma}{Lemma}[subsection]
\newtheorem{theorem}[lemma]{Theorem}
\newtheorem{corollary}[lemma]{Corollary}
\newtheorem{proposition}[lemma]{Proposition}
\theoremstyle{definition}
\newtheorem{example}[lemma]{Example}
\newtheorem{remark}[lemma]{Remark}
\begin{document}

\title{Quadratic-linear duality and rational homotopy theory of chordal arrangements}
\author{Christin Bibby, Justin Hilburn}
\date{}
\maketitle

\begin{abstract}
To any graph and smooth algebraic curve $C$ one may associate a ``hypercurve'' arrangement and one can study the rational homotopy theory of the complement $X$. In the rational case ($C=\C$), there is considerable literature on the rational homotopy theory of $X$, and the trigonometric case ($C = \C^\times$) is similar in flavor. The case of when $C$ is a smooth projective curve of positive genus is more complicated due to the lack of formality of the complement. When the graph is chordal, we use quadratic-linear duality to compute the Malcev Lie algebra and the minimal model of $X$, and we prove that $X$ is rationally $K(\pi,1)$.
\end{abstract}

\section{Introduction}\label{intro}
This paper explores the topology of the rational, trigonometric, and projective (in particular elliptic) analogues of hyperplane arrangements. The rational case consists of linear arrangements, which are finite sets of codimension one linear subspaces of a complex vector space. The trigonometric case consists of toric arrangements, which are finite sets of codimension one subtori in a complex torus. The elliptic case consists of abelian arrangements, which are finite sets of codimension one abelian subvarieties of a product of elliptic curves.  We focus our attention on unimodular and supersolvable arrangements, which are classified by chordal graphs and are therefore called chordal arrangements. Chordal arrangements can be defined without reference to abelian group structure and hence make sense for curves of arbitrary genus.
When we discuss the projective case, we will only consider curves of \textit{positive} genus, as our method does not apply to $\P^1$ (whose cohomology ring is not Koszul). 

In each case, we study a differential graded algebra (DGA) that is a model (in the sense of rational homotopy theory)
for the complement of the arrangement.  
\begin{itemize}
\item For linear arrangements, the complement is formal, which means that the cohomology algebra with
trivial differential is itself a model.  A combinatorial presentation for this algebra is given by Orlik and Solomon \cite{orliksolomon}.
\item For toric arrangements, the complement is also formal, and in the unimodular case a combinatorial presentation of the cohomology
ring is given by De Concini and Procesi \cite{deconciniprocesi}.
\item In the projective case, the complement is not necessarily formal, but combinatorially presented models (with nontrivial differential) are given by the first author \cite{bibby} in the elliptic case and by Dupont \cite{dupont} in general.
\end{itemize}

When the matroid associated to the arrangement is supersolvable, the above model is Koszul; this is due to Shelton and Yuzvinsky for linear arrangements \cite{sheltonyuz},
and we prove it in the toric and projective cases (Theorems \ref{koszuldgathmtoric}, \ref{koszuldgathmabelian}, and \ref{koszuldgathmhighergenus}).
By studying the quadratic dual of the model, one can obtain a combinatorial presentation for a Lie algebra 
and use it to compute the $\Q$-nilpotent completion of the fundamental group and the minimal model.
This is done by Papadima and Yuzvinsky in the linear case \cite{papadimayuz}, and the toric case is completely analogous.
In the projective case, the lack of formality makes this computation more subtle: we need to use nonhomogeneous quadratic duality, 
where the dual to a Koszul differential graded algebra is a quadratic-linear algebra.
With this tool, we extend Papadima and Yuzvinsky's results to the projective setting (Theorem \ref{dualitythm}).

We also prove that complements of chordal arrangements are rational $K(\pi,1)$ spaces.
In the rational and toric cases, this follows from formality and Koszulity \cite{papadimayuz}.  In the projective case
(where we lack formality) it is not automatic, but we obtain it from our concrete description of the minimal model
(Corollary \ref{kpo}).

In the projective case, our results were inspired by \cite{romanbez} where Bezrukavnikov constructed a model of the ordered configuration space of an arbitrary smooth, projective, complex curve of positive genus;
he showed that his model was Koszul, gave a presentation for the dual Lie algebra, and described the minimal model. In fact our results generalize his since the ordered configuration space is the complement of the braid arrangement (which is chordal.)

In Section 2, we review known results on the cohomology of arrangements in each of our cases, giving explicit presentations for the algebras we will consider. 
In Section 3, we review the proof that the cohomology ring of the complement to a chordal linear arrangement is Koszul, and then we prove the analogous results in the toric and projective cases.
In Section 4, we review definitions and results from rational homotopy theory and quadratic-linear duality. 
The reader can skip ahead to Section 5 and refer back to Section 4 as needed. 
In Section 5, we review some known results on the rational homotopy theory of linear arrangements, which also apply to the toric case, and then we prove the analogous results for the projective case. 

\bigskip

\textbf{Acknowledgements. }
The authors would like to thank their advisor Nick Proudfoot for his many helpful comments, suggestions, and guidance. 
The first author would also like to thank Tyler Kloefkorn for helpful discussions on Koszul algebras.
Finally, the authors would like to thank an anonymous referee for suggesting a generalization to higher genus curves after the first draft of this paper. 

\section{Cohomology}\label{prelimsection}

In this section, we collect known results about the cohomology of the complement of a chordal arrangement in each of our cases. Since we will only be considering graphic arrangements in Sections \ref{koszulitysection} and \ref{topologysection} (see Remark \ref{chordalremark}), we will state all of the results in graphical language here, as well. Since our goal will be to study the rational homotopy theory of these spaces, we will also restrict our attention to cohomology with rational coefficients throughout this paper. 

\subsection{Definitions}

An \textbf{ordered graph} is a graph $\Gamma=(\V,\E)$ with an ordering on the vertices $\V$. 
We will assume throughout that our graphs are simple (that is, they have no loops or multiple edges).
An ordered graph can be considered as a directed graph in the following way:
For each edge $e\in\E$, label its larger vertex by $h(e)$ (for ``head'' of an arrow) and its smaller vertex by $t(e)$ (for ``tail'' of an arrow). 
An order on the vertices of $\Gamma$ induces an order on the edges by setting $e<e'$ if $h(e)<h(e')$ or if $h(e)=h(e')$ and $t(e)<t(e')$. 

\begin{remark}
None of the structures in this section will depend on the ordering of the vertices, but it will simplify the notation. The order chosen will also be necessary for the proofs in Section \ref{koszulitysection}.
\end{remark}

Let $\Gamma=(\V,\E)$ be an ordered graph. 
Let $C$ be $\C$, $\C^\times$, or a complex projective curve, and 
let $C^{\V}$ be the complex vector space (respectively torus or projective variety) whose coordinates are indexed by the vertices $\V$.
For each edge $e\in\E$, let $H_e=\{x_\V\in C^\V \ |\ x_{h(e)}=x_{t(e)}\}$. 
The collection $\A(\Gamma,C) = \{H_e\ |\ e\in\E\}$ is a \textbf{graphic} arrangement in $C^\V$. 
In each case, denote the complement of an arrangement $\A$ in $V$ by $X_\A:=V\setminus\cup_{H\in\A}H$.
In the case that $C$ is $\C$, $\C^\times$, or a complex elliptic curve, $\A(\Gamma,C)$ is a \textbf{linear}, \textbf{toric}, or \textbf{abelian} arrangement, respectively. 

\begin{example}
Let $C=\C$, $\C^\times$, or a complex projective curve. 
If $\Gamma$ is the complete graph on $n$ vertices, then $\A=\A(\Gamma,C)$ is the braid arrangement, and 
its complement $X_\A$ is the ordered configuration space of $n$ points on $C$.
\end{example}

\subsection{Linear Arrangements}

For linear arrangements, a combinatorial presentation for the cohomology ring was first given by Orlik and Solomon \cite{orliksolomon}. 
The fact that the complement of a linear arrangement is formal (that is, its cohomology ring is a model for the space; see Subsection \ref{ratlhomotopysubsection}) is originally due to Brieskorn \cite{brieskorn}. 
Here, we state these results for graphic arrangements. 

\begin{theorem}
\label{orliksolomonalgebra}
\emph{\cite[Theorems~3.126\&5.89]{orlikterao}}
Let $\Gamma=(\V,\E)$ be an ordered graph, and let $\A=\A(\Gamma,\C)$. 
Then $X_\A$ is formal and 
$H^*(X_\A,\Q)$ is isomorphic to the exterior algebra on the $\Q$-vector space spanned by 
$$\{g_e\ |\ e\in\E\}$$ modulo the ideal generated by 
\begin{enumerate}
\item[] $\sum_j(-1)^jg_{e_1}\cdots \hat{g}_{e_j}\cdots g_{e_k}$ whenever $\{e_1<\dots<e_k\}$ is a cycle.
\end{enumerate}
\end{theorem}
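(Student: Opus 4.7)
The theorem is essentially a direct specialization of the general Orlik--Solomon theorem to the graphic case, together with Brieskorn's formality result; the plan is therefore to cite the general statements and verify that under the graphic restriction the relations assume the stated form.

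First, I would recall the general Orlik--Solomon theorem: for any central arrangement $\A$ of complex hyperplanes, $H^*(X_\A, \Q)$ is isomorphic to the exterior algebra on degree-one symbols $g_H$ for $H \in \A$, modulo the ideal generated by the boundary relations $\sum_j (-1)^j g_{H_1} \cdots \hat{g}_{H_j} \cdots g_{H_k}$ as $\{H_1, \ldots, H_k\}$ ranges over the circuits of the underlying matroid $M(\A)$. Next, I would identify $M(\A(\Gamma, \C))$ with the graphic matroid of $\Gamma$: the hyperplane $H_e$ is the kernel of the linear form $x_{h(e)} - x_{t(e)}$, and a collection of such forms is linearly dependent exactly when the corresponding edges contain a cycle of $\Gamma$, with minimal dependencies being precisely the edge sets of cycles. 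Substituting this combinatorial identification into the Orlik--Solomon presentation yields the claim.

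For formality, the plan is to invoke Brieskorn's argument: the classes $g_e$ lift canonically to the logarithmic one-forms $\frac{1}{2\pi i}\, d \log(x_{h(e)} - x_{t(e)})$ in the de Rham complex $\Omega^*(X_\A)$, and the subalgebra they generate is closed under $d$ (in fact $d$ acts as zero on it) and surjects onto cohomology. This exhibits a zigzag of quasi-isomorphisms between $(H^*(X_\A, \Q), 0)$ and the de Rham model of $X_\A$, so $X_\A$ is formal.

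There is no substantive obstacle here: the whole argument is a translation between matroid and graph vocabularies. The one point that needs care is matching the sign convention of the boundary relation to the induced order on edges coming from the vertex ordering, which is bookkeeping rather than a genuine difficulty.
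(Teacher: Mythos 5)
The paper gives no proof of this statement—it is quoted directly from Orlik--Terao--and your proposal is exactly the standard derivation behind that citation: specialize the general Orlik--Solomon presentation to the graphic matroid of $\Gamma$ (circuits of $M(\A)$ = cycles of $\Gamma$) and invoke Brieskorn's logarithmic-forms argument for formality. This matches the paper's (implicit) approach, so there is nothing to correct.
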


\subsection{Toric Arrangements}

De Concini and Procesi studied the cohomology of the complement of a toric arrangement.
If $\A$ is a unimodular toric arrangement (that is, all multiple intersections of subtori in $\A$ are connected), they show that the complement $X_\A$ is formal and give a presentation for the cohomology ring. Here, we state the result for graphic arrangements (which are always unimodular). 

\begin{theorem}
\label{toriccohomology}
\emph{\cite[Theorem~5.2]{deconciniprocesi}}
Let $\Gamma=(\V,\E)$ be an ordered graph, and let $\A=\A(\Gamma,\C^\times)$.
Then $X_\A$ is formal and 
$H^*(X_\A,\Q)$ is isomorphic to the exterior algebra on the $\Q$-vector space spanned by $$\{x_v,g_e\ |\ v\in\V, e\in\E\}$$ modulo the ideal generated by:
\begin{enumerate}[(i)]
\item[(ia)]
whenever $e_0,e_1,\dots,e_m$ is a cycle with $t(e_0)=t(e_1)$, $h(e_0)=h(e_m)$, and $h(e_i)=t(e_{i+1})$ for $i=1,\dots,m-1$ (as pictured below)

\begin{center}
\begin{tikzpicture}
[shorten >=2pt,>=triangle 45]
\draw[->](.5,0)-- node[left]{$e_1$} (1,1);
\draw[->](1,1)-- node[above,pos=.3]{$e_2$} (2,1.5);
\draw[->,dashed](2,1.5)--(3,1);
\draw[->](3,1)-- node[right,pos=.3]{$e_m$} (3.5,0);
\draw[->](.5,0)-- node[below]{$e_0$} (3.5,0);
\end{tikzpicture}
\end{center}

we have
$$g_{e_1}g_{e_2}\cdots g_{e_m} - \sum (-1)^{|I|+m+s_I}g_{e_{i_1}}\cdots g_{e_{i_k}}\psi_{e_{j_1}}\cdots \psi_{e_{j_{m-k-1}}}g_{e_0}$$ where the sum is taken over all $I=\{i_1<\cdots<i_k\}\subsetneq\{1,\dots,m\}$ with complement $\{j_1<\cdots<j_{m-k}\}$, $\psi_{e_\ell}=x_{h(e_\ell)}-x_{t(e_{\ell})}$, and $s_I$ is the parity of the permutation $(i_1,\dots,i_k,j_1,\dots,j_{m-k})$. 
\item[(ib)] if we again have a cycle, but have some arrows reversed, relabel the arrows so that $e_1<\dots<e_s<e_0$, %point in the opposite direction of $e_0$ and $e_{s+1},\dots,e_m$ point in the same direction as $e_0$. 
then take the relation from (ia) and replace each $\psi_{e_i}$ with $-\psi_{e_i}$ and each $g_{e_i}$ with $-g_{e_i}-\psi_{e_i}$ whenever $e_i$ points in the opposite direction of $e_0$. 
\item[(ii)] $(x_{h(e)}-x_{t(e)})g_e$ for $e\in\E$.
\end{enumerate}
\end{theorem}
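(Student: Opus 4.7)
The plan is to deduce the statement from the general presentation for unimodular toric arrangement complements given in \cite[Theorem~5.2]{deconciniprocesi}, specialized to the graphic case. There are three things to verify: that the hypothesis applies, that the generators are identified correctly, and that the relations written in graph-theoretic form are the correct translation.

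First, I would check that $\A(\Gamma,\C^\times)$ is unimodular. The defining character of $H_e$ is $\chi_e = x_{h(e)}x_{t(e)}^{-1}$, and the matrix expressing $\{\chi_e : e\in\E\}$ in the basis $\{x_v : v\in\V\}$ of the character lattice of $(\C^\times)^\V$ is the signed edge-vertex incidence matrix of $\Gamma$, which is totally unimodular. Hence every nonempty intersection of the subtori $H_e$ is connected and the De Concini--Procesi presentation applies. Under this presentation, the $x_v$ are pullbacks of the canonical generators of $H^1((\C^\times)^\V,\Q)$, and the $g_e$ are logarithmic residue classes along the $H_e$; relation (ii) reflects the fact that the form $d\chi_e/\chi_e$ restricts trivially to $H_e$, so wedging $g_e$ with the class of $\psi_e = x_{h(e)}-x_{t(e)}$ vanishes.

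Second, I would translate the De Concini--Procesi circuit relations into the cycle relations (ia) and (ib). A cycle in $\Gamma$ encodes a multiplicative dependence $\prod_i \chi_{e_i}^{\pm 1} = 1$ among the defining characters, which is precisely a circuit of the underlying matroid. The associated cohomological relation, when expanded along the lines of \cite{deconciniprocesi}, expresses $g_{e_1}\cdots g_{e_m}$ in terms of the remaining products where some $g_{e_\ell}$ have been replaced by the corresponding $\psi_{e_\ell}$ and $g_{e_0}$ is factored out. This is exactly the form of (ia) when the cycle is consistently oriented around $e_0$; the parity $s_I$ records the sign of the shuffle separating the ``$g$-indices'' from the ``$\psi$-indices.'' For a cycle with some edges opposite to $e_0$, normalizing those characters by inversion replaces $\chi_{e_i}$ by $\chi_{e_i}^{-1}$, which on the level of cohomology classes becomes $g_{e_i}\mapsto -g_{e_i}-\psi_{e_i}$ and $\psi_{e_i}\mapsto -\psi_{e_i}$, yielding (ib).

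The main obstacle is the sign bookkeeping in the second step: verifying that the explicit parities $s_I$, the sign $(-1)^{|I|+m+s_I}$, and the orientation conventions in (ia) and (ib) precisely match what emerges from De Concini--Procesi's formula after the substitutions above. This is a routine but tedious combinatorial check, and is the only nontrivial content beyond invoking the cited theorem. Formality is then immediate, since the algebra just described is shown in \cite{deconciniprocesi} to be realized by logarithmic forms and thereby serves as a model for $X_\A$.
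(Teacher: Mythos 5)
The paper offers no proof of this theorem: it is stated as a direct specialization of the cited result of De Concini and Procesi, with the only substantive observation being that graphic toric arrangements are always unimodular. Your proposal follows exactly this route (with a correct justification of unimodularity via total unimodularity of the signed incidence matrix and a plausible account of the translation of the circuit relations), so it matches the paper's approach.
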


The presentation encodes both the combinatorics of the arrangement and the geometry of the ambient space. 
The generators $x_v$ come from the cohomology of $\C^\times$, while the generators $g_e$ are similar to that of the Orlik-Solomon algebra for its rational counterpart. 
However, the toric analogue of the Orlik-Solomon relation is much more complicated.

\subsection{Abelian Arrangements}

The elliptic analogue has a very different flavor, since the complement to an arrangement is not formal. 
If $\A$ is a unimodular abelian arrangement (that is, all multiple intersections of subvarieties in $\A$ are connected), the first author gave a presentation for a model for $X_\A$ \cite[Thorem~4.1]{bibby}. 
The presentation for graphic abelian arrangements is also a special case of one given by Dupont and Bloch \cite{dupont}, which we state in the next subsection. 
%Here, we state the result for graphic arrangements.

\begin{theorem}
\label{modelthm}
%\emph{\cite[Theorem~4.1]{bibby}}
Let $E$ be a complex elliptic curve. 
Let $\Gamma=(\V,\E)$ be an ordered graph, and let $\A=\A(\Gamma,E)$. 
Define the differential graded algebra $A(\A)$ as the exterior algebra on the $\Q$-vector space spanned by $$\{x_v,y_v,g_e\ |\ v\in\V,e\in\E\}$$ modulo the ideal generated by the following relations:
\begin{enumerate}[(i)]
\item $\sum_j(-1)^jg_{e_1}\cdots \hat{g}_{e_j}\cdots g_{e_k}$ whenever $\{e_1<\dots<e_k\}$ is a cycle and
\item $(x_{h(e)}-x_{t(e)})g_e$, $(y_{h(e)}-y_{t(e)})g_e$, for each $e\in\E$.
\end{enumerate}
The differential is defined by putting $dx_v=dy_v=0$ and $$dg_e=(x_{h(e)}-x_{t(e)})(y_{h(e)}-y_{t(e)}).$$ The DGA $(A(\A),d)$ is a model for $X_\A$. 
\end{theorem}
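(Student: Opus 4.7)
The plan is to construct an explicit CDGA morphism $\phi\colon A(\A) \to \Omega^*(X_\A)$ into a PL-form model of $X_\A$ and prove it is a quasi-isomorphism. On generators, I would send $x_v$ and $y_v$ to the pullbacks $\pi_v^*(\alpha)$ and $\pi_v^*(\beta)$, where $\{\alpha,\beta\}$ is a basis of $H^1(E,\Q)$ and $\pi_v\colon E^\V\to E$ is the projection to the $v$-th factor. For each edge $e\in\E$, the difference map $m_e\colon E^\V\to E$, $x\mapsto x_{h(e)}-x_{t(e)}$, sends $X_\A$ into $E\setminus\{0\}$; since $H^2(E\setminus\{0\})=0$, the class $\alpha\beta$ admits a primitive $\gamma$ on $E\setminus\{0\}$, and I would set $g_e=m_e^*(\gamma)$. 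Working inside a minimal model of $E\setminus\{0\}$ built on $\alpha, \beta, \gamma$, the relations $\alpha\gamma=\beta\gamma=0$ can be arranged at the cochain level, so that $\phi$ respects relation (ii) after pullback.

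The differential formula $dg_e=(x_{h(e)}-x_{t(e)})(y_{h(e)}-y_{t(e)})$ is then immediate from $d\gamma=\alpha\beta$ together with the additive expansion of $m_e^*$. Relation (i) for a cycle $\{e_1<\cdots<e_k\}$ follows from the linear dependence $\sum(-1)^j m_{e_j}=0$ among the difference characters (with signs chosen so that the edges are oriented consistently around the cycle), which forces the corresponding alternating sum of $g_{e_j}$'s to vanish by the same argument that produces the classical Orlik--Solomon relations, now transported to the elliptic setting.

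To prove $\phi$ is a quasi-isomorphism, I would compare $A(\A)$ to Dupont's general model \cite{dupont} for complements of normal crossing divisors in smooth projective varieties. The Leray spectral sequence for $j\colon X_\A\hookrightarrow E^\V$ has an $E_2$-page given by a direct sum indexed by the strata (intersections of the $H_e$); in the unimodular graphic case every nonempty stratum is a product of elliptic curves and corresponds to a forest subset of edges. Filtering $A(\A)$ by the number of $g$-generators, the associated graded decomposes identically, and $\phi$ identifies the two decompositions term-by-term. The main obstacle is matching the $E_2$-differential, which is a Gysin map into higher-codimension strata, with the differential of $A(\A)$; this reduces to a local computation near each $H_e$ that recovers the diagonal class on $E\times E$ as $(\alpha_1-\alpha_2)(\beta_1-\beta_2)$, yielding exactly the formula $dg_e=(x_{h(e)}-x_{t(e)})(y_{h(e)}-y_{t(e)})$ on the $A(\A)$ side and closing the argument.
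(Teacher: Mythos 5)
First, note that the paper does not prove Theorem \ref{modelthm} at all: it is quoted from \cite[Theorem~4.1]{bibby} and \cite{dupont}, where it is established via Morgan/Dupont-type models for complements of divisors in smooth projective varieties, i.e.\ essentially the Leray spectral sequence comparison you sketch in your second paragraph. That part of your proposal is the right strategy and matches the cited proofs: one identifies the $E_1$/$E_2$-page $\bigoplus_F H^*(H_F)\otimes V_F$ over the flats with the associated graded of $A(\A)$ for the filtration by the number of $g$-generators, and checks that the Gysin differential is the diagonal class $(\alpha_1-\alpha_2)(\beta_1-\beta_2)$. (One caveat: strata correspond to flats, not to forests of edges, and the multiplicity space $V_F$ is spanned by the nbc sets associated to $F$; getting the multiplicative structure, rather than just the cohomology, is exactly what Dupont's construction supplies and is not a formal consequence of the Leray spectral sequence.)

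The genuine gap is in your first paragraph. There is no CDGA map $\phi\colon A(\A)\to\Omega^*(X_\A)$ of the kind you describe: relation (ii) requires $m_e^*(\alpha\gamma)=m_e^*(\beta\gamma)=0$, hence $\alpha\gamma=\beta\gamma=0$ as forms on $E\setminus\{0\}$; but two $1$-forms whose wedges with $\gamma$ both vanish pointwise force $\gamma=0$ wherever $\alpha\wedge\beta\neq 0$, which is everywhere since $\alpha\wedge\beta$ is the volume form, contradicting $d\gamma=\alpha\beta$. So the relations cannot be ``arranged at the cochain level'' by choosing a primitive; they only hold after passing through a zig-zag of quasi-isomorphisms, and producing the intermediate CDGAs is the actual content of \cite{bibby,dupont}. (Also, a minimal model is a \emph{free} graded-commutative algebra, so the relations $\alpha\gamma=\beta\gamma=0$ cannot hold there either; and the nontriviality of precisely these products is what makes $X_\A$ non-formal.) Relation (i) has the same problem in sharper form: the classical Arnold/Orlik--Solomon identity among the $d\log$ forms comes from an explicit partial-fractions computation, and the linear dependence $\sum\pm m_{e_j}=0$ of the difference maps does not by itself imply any identity among the pullbacks $m_{e_j}^*\gamma$; no elliptic analogue of that computation is supplied, and none holds on the nose for any natural choice of $\gamma$. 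Since your quasi-isomorphism argument is phrased as a property of the (nonexistent) map $\phi$, the argument as written does not close; you should instead run the comparison entirely inside Dupont's model, where $A(\A)$ appears as a quotient presentation of his combinatorially defined CDGA.
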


In a similar way to toric arrangements, this algebra encodes both the combinatorics of the arrangement (with the Orlik-Solomon relation) and the geometry of the ambient space.
The generators $x_v,y_v$ come from the cohomology of $E$, while the $g_e$ are from the Orlik-Solomon algebra of its rational counterpart. 

\subsection{Higher Genus Curves}

By the work of Dupont and Bloch, we have the following presentation for graphic arrangements in the case that $C$ is a complex projective curve of positive genus \cite{dupont}, which we state here. 

\begin{theorem}
\label{modelthm2}
Let $C$ be a complex projective curve with genus $g\geq 1$. 
Define the differential graded algebra $A(\A)$ as the exterior algebra on the $\Q$-vector space spanned by $$\{x^i_v,y^i_v,g_e\ |\ v\in\V,e\in\E,i=1,\dots,g\}$$ modulo the ideal generated by the following relations:
\begin{enumerate}[(i)]
\item $\sum_j(-1)^jg_{e_1}\cdots \hat{g}_{e_j}\cdots g_{e_k}$ whenever $\{e_1<\dots<e_k\}$ is a cycle,
\item $(x^i_{h(e)}-x^i_{t(e)})g_e$, $(y^i_{h(e)}-y^i_{t(e)})g_e$, for each $e\in\E$,
\item[(iiia)] $x_v^iy_v^j$, $x_v^ix_v^j$, $y_v^iy_v^j$ for $i\neq j$, and 
\item[(iiib)] $x_v^iy_v^i-x_v^jy_v^j$. 
\end{enumerate}
The differential is defined by putting $dx^i_v=dy^i_v=0$ and $$dg_e=x^1_{h(e)}y^1_{h(e)}+x^1_{t(e)}y^1_{t(e)} - \sum_{i=1}^g \left(x^i_{h(e)}y^i_{t(e)} + x^i_{t(e)}y^i_{h(e)}\right).$$
The DGA $(A(\A),d)$ is a model for $X_\A$.
\end{theorem}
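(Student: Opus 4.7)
The statement is a specialization of Dupont's general model for the complement of a smooth, transversely meeting divisor arrangement in a smooth projective variety \cite{dupont}; the plan is to identify each piece of the statement with the corresponding piece of Dupont's model and then invoke the quasi-isomorphism that he proves in full generality.

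First I would compute the cohomology of the ambient variety $C^\V$ via the K\"unneth formula. Fix a symplectic basis $\alpha^1,\dots,\alpha^g,\beta^1,\dots,\beta^g$ of $H^1(C,\Q)$ with $\alpha^i\beta^i$ equal to the fundamental class for every $i$ and all other pairwise products zero, and write $x_v^i,y_v^i$ for the pullbacks of $\alpha^i,\beta^i$ along the $v$-th projection $C^\V\to C$. The relations (iiia) and (iiib) are exactly the relations inherited from the K\"unneth isomorphism $H^*(C^\V,\Q)=\bigotimes_v H^*(C,\Q)$, so the subalgebra generated by the $x_v^i$ and $y_v^i$ modulo (iiia)--(iiib) matches $H^*(C^\V,\Q)$ on the nose.

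Next I would match the generators $g_e$ with Dupont's Gysin generators for the hypersurfaces $H_e\subset C^\V$, whose differential in his model is the cohomology class $[H_e]\in H^2(C^\V)$. Since $H_e$ is the pullback of the diagonal in $C^{\{h(e),t(e)\}}\cong C\times C$ along the projection, $[H_e]$ is the pullback of the K\"unneth expression for the class of the diagonal on $C$, and a direct dual-basis computation yields
\[
[H_e]=x^1_{h(e)}y^1_{h(e)}+x^1_{t(e)}y^1_{t(e)}-\sum_{i=1}^g\bigl(x^i_{h(e)}y^i_{t(e)}+x^i_{t(e)}y^i_{h(e)}\bigr),
\]
up to sign conventions consistent with those of \cite{dupont}. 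Relation (ii) is then the Gysin vanishing statement that classes restricting to zero on $H_e$ annihilate $g_e$, and the differences $x_{h(e)}^i-x_{t(e)}^i$ and $y_{h(e)}^i-y_{t(e)}^i$ do vanish on the diagonal $H_e$. Relation (i) is the Arnold/Orlik-Solomon relation satisfied by the Gysin classes along a cycle; it is already present in the classical linear case and records the combinatorial incidence structure of the intersections of the $H_e$, which is unchanged when passing from $\C$ to an arbitrary smooth curve.

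The hard part --- producing an actual quasi-isomorphism from $(A(\A),d)$ to a model of $X_\A$ --- I would defer entirely to Dupont's theorem. His argument filters the logarithmic de Rham complex of the pair $\bigl(C^\V,\bigcup_e H_e\bigr)$ by pole order and identifies the associated graded with the putative model; this is the technically demanding step, but it is carried out in \cite{dupont} in greater generality than is needed here. The work in our setting is then purely one of translation: recognizing that the exterior-algebra presentation above, with its specific relations and differential, is precisely the output of Dupont's construction applied to the divisor arrangement $\bigl(C^\V,\{H_e\}_{e\in\E}\bigr)$.
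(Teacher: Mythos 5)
Your proposal matches the paper's treatment: Theorem \ref{modelthm2} is stated there without proof as a specialization of Dupont's general model, and your identification of the generators $x_v^i, y_v^i$ with the K\"unneth classes, of $dg_e$ with the class of the diagonal $[\Delta_e]\in H^2(C^\V)$, and the deferral of the quasi-isomorphism to \cite{dupont} is exactly the intended reading (the paper itself remarks that $g_e\mapsto[\Delta_e]$). No gap; this is the same approach.
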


Just as before, this algebra encodes both the combinatorics of the arrangement and the geometry of the ambient space. 
The generators $x^i_v,y^i_v$ come from the cohomology of $C^\V$, and we write these generators and relations here explicitly since we will use this presentation  to show that the algebra is Koszul in Section \ref{highergenuskoszulity}.
A more elegant way of writing the differential is to say that the generator $g_e$ maps to $[\Delta_e]\in H^2(C^\V)$, where $\Delta_e$ is the diagonal corresponding to the coordinates indexed by $h(e)$ and $t(e)$ in $C^\V$. 

\section{Koszulity}\label{koszulitysection}

In this section, we will show that for chordal arrangements, the algebras presented in Theorems \ref{orliksolomonalgebra}, \ref{toriccohomology}, \ref{modelthm}, and \ref{modelthm2} are Koszul.
The cohomology of the complement of a chordal linear arrangement was first shown to be Koszul by Shelton and Yuzvinsky \cite{sheltonyuz}. In Subsection \ref{linearkoszulity}, we outline the proof presented by Yuzvinsky in \cite{yuz}. 
The analogous results for toric and abelian arrangements, as well as for higher genus curves, are new and presented in Subsections \ref{torickoszulity}, \ref{abeliankoszulity}, and \ref{highergenuskoszulity}, respectively. 

\subsection{Chordal Arrangements}

Let $C$ be $\C$, $\C^\times$, or a complex elliptic curve, and let $\Gamma=(\V,\E)$ be a simple graph. If $\Gamma$ is chordal (that is, every cycle with more than three vertices has a chord), then the graphic arrangement $\A(\Gamma,C)$ is said to be \textbf{chordal}. 

A \textbf{perfect elimination ordering} is an order on the vertices so that for all $v\in\V$, $v$ is a simplicial vertex (a vertex whose neighbors form a clique) in the graph $\Gamma_v:=\Gamma-\{v'\in\V\ |\ v'>v\}$. 
Such an ordering exists if and only if $\Gamma$ is chordal \cite[p.~851]{fulkersongross}.
From now on, we will use such an order when discussing chordal graphs.

We say a set $S=\{e_1<\cdots<e_k\}$ is a \textbf{broken circuit} if there is some edge $e$ with $e<e_1$ such that $S\cup\{e\}$ is a cycle. A set $S\subseteq\E$ is \textbf{nbc} (non-broken circuit) if no subset of it is a broken circuit. 
Let $F\subseteq\E$ be a flat of the matroid of $\Gamma$, and consider the subgraph $\Gamma[F]$ of $\Gamma$, which has edges $F$ and vertices adjacent to edges in $F$. 
We say that an $\nbc$ set $S$ is associated to $F$ if $S\subseteq F$ and $S$ spans $\Gamma[F]$.

\begin{remark}
\label{chordalremark}
In the case of linear, toric, or abelian arrangements, 
the essential property that we need for our results is that the arrangement is unimodular (for Theorems \ref{toriccohomology} and \ref{modelthm}) and supersolvable (for Theorems \ref{koszuldgathmtoric} and \ref{koszuldgathmabelian}). 
We could state all of our results in the language of unimodular and supersolvable arrangements; however, this isn't any more general than the language of chordal graphs.
This is because Ziegler showed that a matroid is unimodular and supersolvable if and only if it is chordal graphic (Proposition 2.6 and Theorem 2.7 of \cite{ziegler}). 
In fact, since the edge set of $\Gamma-v$ is a modular hyperplane when $v$ is a simplicial vertex \cite[Proposition~4.4]{ziegler}, the maximal chain of modular flats in the matroid corresponds exactly to our ordering on the vertices.
\end{remark}

\subsection{Linear Arrangements}\label{linearkoszulity}

Yuzvinsky proved that the Orlik-Solomon ideal has a quadratic Gr\"obner basis when $\A$ is supersolvable (eg. chordal), which implies that $H^*(X_\A)$ is Koszul \cite[Corollary~6.21]{yuz}. 
We outline his technique as we will use similar techniques in the toric and abelian cases. 

For ease of notation, whenever $C=\{e_1<\cdots<e_k\}$ we will use $g_C:=g_{e_1}\cdots g_{e_k}$ and $\partial g_C:=\sum_j(-1)^jg_{e_1}\cdots\hat{g}_{e_j}\cdots g_{e_k}$.
Let $\Gamma=(\V,\E)$ be a chordal graph with a perfect elimination ordering on the vertices (and edges). 

First, Yuzvinsky showed that the set $G=\{\partial g_C\ |\ C \text{ is a circuit}\}$ is a Gr\"obner basis for the ideal $I=\langle G\rangle$ in the exterior algebra $\Lambda(g_e\ |\ e\in\E)$, with the degree-lexicographic order such that $g_e<g_{e'}$ whenever $e<e'$. 
The leading (or initial) term of $\partial g_C$ is $\In(\partial g_C)=g_{C'}$ where $C'\subseteq C$ is the broken circuit associated to $C$.
Recall that a subset $G$ of an ideal $I$ is a Gr\"obner basis if $\In(I)=\langle \In(G)\rangle$.
To prove that this is a Gr\"obner basis, Yuzvinsky used the fact that the set of monomials not in $\In(I)$ form a basis for $H^*(X_\A)=\Lambda(g_e)/I$. 
The set of monomials not in $\In(I)$ is the basis $\{g_C\ |\ C\text{ is }\nbc\}$. 

Moreover, since $\Gamma$ is chordal, this Gr\"obner basis can be reduced to a quadratic Gr\"obner basis. This is because we have the following property (which follows immediately from Proposition 6.19 of \cite{yuz}): $S\subseteq\E$ is an $\nbc$ set if and only if for all distinct $e,e'\in S$ we have $h(e)\neq h(e')$. 
A circuit $C$ is not $\nbc$, hence there exist distinct edges $e,e'\in C$ such that $h(e)=h(e')$. But then $\{e,e'\}$ contains (and hence is) a broken circuit, and so it is contained in some circuit $T$ with $|T|=3$. Thus $\In(\partial g_T)=g_{e}g_{e'}$ divides $\In(\partial g_C)$, and we can reduce our Gr\"obner basis to a quadratic one.

\subsection{Toric Arrangements}\label{torickoszulity}

Since the complement to a chordal toric arrangement is formal (as in the linear case), we want to show that its cohomology ring is Koszul. 
Our argument will be similar to (but slightly more complicated than) the linear case.
We will provide a $\Q$-basis for the cohomology ring, use it to show that our generating set of the ideal is a Gr\"obner basis, and then reduce the Gr\"obner basis to a quadratic one.

\begin{lemma}
\label{choicelemmatoric}
Let $\Gamma=(\V,\E)$ be a chordal graph. 
Let $F$ be a flat of the arrangement $\A=\A(\Gamma,\C^\times)$, and let $S$ be a non-broken circuit associated to $F$.  
Define $I_F$ to be the ideal generated by 
$$\left\{x_{h(e)}-x_{t(e)}\ |\ e\in F\right\}$$ 
in $\Lambda(x_v\ |\ v\in\V)\cong H^*((\C^\times)^\V)$, and let $H_F=\cap_{e\in F}H_e\subseteq (\C^\times)^\V$. 
\begin{enumerate}
\item With the degree-lexicographic order and $x_v<x_{v'}$ whenever $v<v'$, the set 
$$G_S := \left\{ x_{h(e)}-x_{t(e)}\ |\ e\in S\right\}$$
is a Gr\"obner basis for $I_F$.
\item The set $\{x_{i_1}\cdots x_{i_r}\ |\ h(e)\notin \{i_1,\dots,i_r\} \text{ for each } e\in S\}$ is a basis for $$H^*(H_F) \cong \Lambda(x_v\ |\ v\in\V)/I_F$$ and this basis does not depend on the choice of $\nbc$ set $S$.
\end{enumerate}
\end{lemma}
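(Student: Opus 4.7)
The plan is to combine a dimension-count approach for the Gröbner basis statement in (1) with a simple counting argument to deduce (2) and the independence from $S$. First, I would observe that since $S$ is $\nbc$, it contains no circuit (any circuit $C\subseteq S$ would have $C$ minus its smallest edge as a broken circuit inside $S$), so $S$ is a spanning forest of $\Gamma[F]$. Writing $n=|V(\Gamma[F])|$ and $c$ for the number of connected components of $\Gamma[F]$, this gives $|S|=n-c=\rk(F)$. The linear forms $\{x_{h(e)}-x_{t(e)}\}_{e\in S}$ and $\{x_{h(e)}-x_{t(e)}\}_{e\in F}$ then span the same subspace of $\mathrm{span}\{x_v\mid v\in\V\}$ (a standard fact for the graphic matroid), so $G_S$ generates $I_F$.

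For (1), since $S$ is $\nbc$, the heads $h(e)$ for $e\in S$ are pairwise distinct, as invoked in Subsection \ref{linearkoszulity}. In our order, $\In(x_{h(e)}-x_{t(e)})=x_{h(e)}$, so $J:=\langle \In(G_S)\rangle = \langle x_{h(e)}\mid e\in S\rangle\subseteq \In(I_F)$. The quotient $\Lambda(x_v\mid v\in\V)/J$ has $\Q$-basis indexed by subsets of $\V\setminus h(S)$, hence $\Q$-dimension $2^{|\V|-|S|}$. On the other hand, by unimodularity (Remark \ref{chordalremark}), $H_F$ is a subtorus of $(\C^\times)^\V$ of dimension $|\V|-\rk(F)=|\V|-|S|$, so $\dim H^*(H_F)=2^{|\V|-|S|}$. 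Since initial ideals preserve Hilbert series in the exterior algebra, $\dim \Lambda/I_F=\dim \Lambda/\In(I_F)\leq\dim\Lambda/J=2^{|\V|-|S|}=\dim H^*(H_F)=\dim \Lambda/I_F$; equality throughout forces $J=\In(I_F)$, i.e., $G_S$ is a Gröbner basis.

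For (2), the Gröbner basis immediately yields that the monomials in $\Lambda(x_v\mid v\in\V)$ not divisible by any $x_{h(e)}$, $e\in S$, descend to a $\Q$-basis of $H^*(H_F)$. To show this basis is independent of $S$, I would argue that $h(S)$ itself depends only on $F$: the $c$ minimum vertices (one per component of $\Gamma[F]$) cannot be heads of any edge in $F$, and neither can the $|\V|-n$ vertices outside $V(\Gamma[F])$. Together these give $(|\V|-n)+c=|\V|-|S|=|\V|-|h(S)|$ vertices outside $h(S)$, which is tight, so $h(S)$ is exactly the set of non-minimum vertices within components of $\Gamma[F]$ --- a set determined by $F$ alone. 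The main subtlety is geometric rather than combinatorial: one must correctly identify $H_F$ as a subtorus and compute its dimension as $|\V|-|S|$; with that in hand, the rest is bookkeeping.
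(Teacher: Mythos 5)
Your proof is correct, and it rests on the same two combinatorial inputs as the paper's argument: that $S$ is a basis of the flat (so $|S|=\rk(F)$ and $G_S$ spans the degree-one part of $I_F$), and that distinct edges of an $\nbc$ set in a chordal graph with a perfect elimination ordering have distinct heads. Where you differ is in how you certify the Gr\"obner property: the paper row-reduces the matrix $M_F$ of linear generators down to the rows indexed by $S$ and observes that distinct heads put it in row echelon form, whereas you run a Hilbert-series squeeze, comparing $\dim \Lambda/\langle\In(G_S)\rangle = 2^{|\V|-|S|}$ with $\dim\Lambda/I_F$ computed geometrically from the subtorus $H_F$. The geometric detour is harmless but unnecessary (and would be mildly circular if one regarded the identification $H^*(H_F)\cong\Lambda/I_F$ as part of what is being proved rather than as given): you can get $\dim\Lambda/I_F=2^{|\V|-\rk(F)}$ purely algebraically, since $I_F$ is generated by $\rk(F)$ linearly independent linear forms. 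For the independence of the basis from $S$, your counting argument identifying $h(S)$ as exactly the set of non-minimum vertices of the components of $\Gamma[F]$ is more explicit than the paper's one-line claim that $h(S)=h(F)$, and in fact supplies the justification that claim quietly needs. Both routes buy the same thing; yours trades explicit Gaussian elimination for a dimension count.
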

\begin{proof}
For linear relations, finding a Gr\"obner basis is equivalent to Gaussian elimination, and so consider the matrix $M_F$  
whose rows are indexed by edges $e\in F$, whose columns are indexed by the vertices in decreasing order, and whose entries are zero except  $(M_F)_{e,h(e)}=1$ and $(M_F)_{e,t(e)}=-1$ (so that row $e$ corresponds to the element $x_{h(e)}-x_{t(e)}$).
Note that since  $|S|=\rk(S)=\rk(M_F)$, we may use row operations so that the rows corresponding to elements of $S$ remain unchanged while all other rows are zero. Moreover, since $|\{h(e)\ |\ H_e\in S\}|=|S|$, the matrix is in row echelon form.
Thus, $G_S$ is a Gr\"obner basis for $I_F$.

For part (2), by Gr\"obner basis theory, the set of monomials not in $\In(I_F)$ form a basis for $H^*(H_F)$. Since the ideal $\In(I_F)$ is generated by $\In(G_S) = \{x_{h(e)}|e\in S\}$, the monomials not in $\In(I_F)$ are precisely those stated.
Since $S$ is an $\nbc$ set associated to $F$, it spans the subgraph $\Gamma[F]$. 
Thus $\{h(e)\ |\ e\in F\}=\{h(e)\ |\ e\in S\}$ and the basis given does not depend on $S$.
\end{proof}

For ease of notation, we will use $$x_Ag_C:= x_{a_1}\cdots x_{a_r}g_{c_1}\cdots g_{c_k}$$ 
where $A=\{a_1<\dots<a_r\}$ and $C=\{c_1<\dots<c_k\}$.
We will also denote relations (ia) and (ib) from Theorem \ref{toriccohomology} by $r_C$ for a cycle $C$. 

\begin{lemma}
\label{basislemmatoric}
Let $\Gamma=(\V,\E)$ be a chordal graph, and let $\A=\A(\Gamma,\C^\times)$. 
Define $P$ to be the set of all monomials $x_Ag_C$ such that $C$ is a non-broken circuit and $h(e)\notin A$ for all $e\in C$. Then $P$ is a basis for $H^*(X_\A)$.
\end{lemma}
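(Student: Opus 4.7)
The plan is to emulate Yuzvinsky's Gr\"obner-basis argument from the linear case (Subsection \ref{linearkoszulity}), but with two flavors of reduction: one based on the cycle relations (ia) and (ib) of Theorem \ref{toriccohomology} to eliminate broken circuits from the $g$-part of a monomial, and one based on relation (ii), $x_{h(e)}g_e = x_{t(e)}g_e$, to remove forbidden $x$-factors. I would equip $\Lambda(x_v,g_e)$ with a degree-lexicographic order extending the perfect elimination order on $\V$ and $\E$, with $g$-generators weighted above $x$-generators. Under this order, $P$ should be exactly the complement of the initial ideal of the defining ideal $I$.

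For the spanning step, I take a monomial $x_A g_C$ that is not in $P$. If $h(e)\in A$ for some $e\in C$, relation (ii) substitutes $x_{h(e)}g_e \mapsto x_{t(e)}g_e$, which strictly decreases $A$ in a lex order on vertex subsets (with cancellation when $t(e)\in A$ already). If $C$ contains a broken circuit $\{e_1<\cdots<e_m\}$ completed by $e_0<e_1$ to a cycle, relation (ia) or (ib) rewrites $g_{e_1}\cdots g_{e_m}$ as a sum of terms, each involving $g_{e_0}$ together with $\psi$-factors $x_{h(e_\ell)}-x_{t(e_\ell)}$; the resulting $g$-parts are strictly smaller in lex order because $g_{e_0}$ replaces one of the $g_{e_i}$ with $i\ge 1$. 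Iterating, first on the $g$-part and then on the $x$-part, should terminate in a linear combination of elements of $P$.

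For linear independence, rather than chase confluence of the rewriting system, I would count dimensions. By De Concini--Procesi \cite{deconciniprocesi}, the cohomology of $X_\A$ decomposes as
$$H^*(X_\A) \;\cong\; \bigoplus_{C\text{ nbc}} H^*(H_{F_C})\cdot g_C,$$
where $F_C$ is the flat generated by $C$. Part (2) of Lemma \ref{choicelemmatoric} identifies each summand with the span of $\{x_A : h(e)\notin A \text{ for all } e\in C\}$, so $\dim H^n(X_\A) = |P\cap \Lambda^n|$ in every degree $n$. Combined with the spanning property, this forces $P$ to be a basis.

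The main obstacle will be organizing the reduction to guarantee termination: applying (ia) or (ib) shrinks the $g$-part but can enlarge the $x$-part, and the new $x_{h(e_0)}$ factors may then trigger further (ii) reductions. The delicate combinatorial point is choosing a well-founded order---weighting the $g$-part lex-first over the $x$-part---and verifying that every rewrite strictly decreases it, so that the two rule types interact harmlessly. Once termination is established, the dimension count above closes the argument without any need for confluence of the reduction system.
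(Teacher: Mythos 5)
Your proposal is correct, but roughly half of it does work the paper never needs to do. The paper's entire proof is your dimension-count step: the De Concini--Procesi decomposition $H^*(X_\A)=\bigoplus_F H^*(H_F)\otimes V_F$ over flats $F$, with $V_F$ spanned by the $g_C$ for $\nbc$ sets $C$ associated to $F$, already exhibits $P$ as the image of a basis once Lemma \ref{choicelemmatoric}(2) supplies a basis of each $H^*(H_F)$ --- the decomposition map sends $x_A\otimes g_C$ to the product $x_Ag_C$, so linear independence \emph{and} spanning come for free and no rewriting is required. Your reduction argument for spanning is sound as far as I can tell: with the $g$-part compared degree-then-lex before the $x$-part, rule (ii) fixes the $g$-part and strictly lowers the $x$-part, while (ia)/(ib) replace a broken circuit by edge sets that either have fewer edges or trade some $e_j$ for the smaller $e_0$, hence are strictly lex-smaller; the one inaccuracy is that in case (ib) the correction terms coming from expanding $(-g_{e_i}-\psi_{e_i})$ need not contain $g_{e_0}$, though they still have strictly smaller $g$-support, so termination survives. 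But this is redundant given the decomposition. It is worth noting that the paper deliberately runs your intended logic backwards in Theorem \ref{koszuldgathmtoric}: rather than using Gr\"obner/rewriting machinery to produce the basis, it uses the basis $P$ (obtained geometrically here) to conclude by a counting argument that $G$ is a Gr\"obner basis, precisely so that no termination or confluence analysis of the complicated relations (ia)/(ib) is ever needed. Your route buys independence from the precise form of the decomposition isomorphism (you only use its dimensions); the paper's buys brevity.
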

\begin{proof}
There is a decomposition into the flats of $\A$ \cite[Remark~4.3(2)]{deconciniprocesi} (see also \cite[Lemma~3.1]{bibby}), which is given by the following:
For a flat $F$, let $H_F=\cap_{e\in F}H_e\subseteq(\C^\times)^\V$, and let $V_F$ be the vector space spanned by $g_C$ for all $\nbc$ sets $C$ associated to $F$. 
Then $$H^*(X_\A)=\oplus_F H^*(H_F)\otimes V_F.$$
Denote $H^*(H_F)\otimes V_F$ by $A_F$. 
To show that $P$ is a basis for $H^*(X_\A)$, it suffices to show that 
$$P\cap A_F=\{x_Ag_C\ |\ h(c)\notin A\text{ for }c\in C, C\text{ is an nbc set associated to }F\}$$
is a basis for $A_F$.
But this follows from Lemma \ref{choicelemmatoric}.
\end{proof}

\begin{theorem} 
\label{koszuldgathmtoric}
Let $\A$ be a chordal toric arrangement. Then $H^*(X_\A)$ is  Koszul.
\end{theorem}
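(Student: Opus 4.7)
The plan is to imitate the strategy of the linear case outlined in Subsection \ref{linearkoszulity}: exhibit a quadratic Gr\"obner basis for the defining ideal of $H^*(X_\A)$ inside the free exterior algebra $\Wedge(x_v, g_e \mid v\in\V, e\in\E)$, which will yield Koszulity. I would fix a perfect elimination ordering on $\V$ and the induced order on $\E$, and equip the ambient exterior algebra with the degree-lexicographic order extending $x_v < x_{v'}$ for $v<v'$, $g_e < g_{e'}$ for $e < e'$, and $x_v < g_e$ for every $v$ and $e$. The generators of the defining ideal are the cycle relations $r_C$ from (ia)/(ib) together with the geometric relations $(x_{h(e)} - x_{t(e)}) g_e$ from (ii).

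First I would compute leading terms. For type (ii), $\In\bigl((x_{h(e)} - x_{t(e)}) g_e\bigr) = x_{h(e)} g_e$. For a cycle $C = \{e_0 < e_1 < \dots < e_m\}$ in the (ia) configuration I expect $\In(r_C) = g_{e_1}\cdots g_{e_m}$, the broken-circuit monomial: the convention $x_v < g_e$ ensures that any correction term containing a $\psi_{e_\ell}$-factor (hence at least one $x$-generator) is dominated by the pure-$g$ monomial, while the remaining correction terms replace $g_{e_m}$ with $g_{e_0}$ and are therefore strictly smaller in lex. The (ib) case reduces to the same statement after the relabeling prescribed in Theorem \ref{toriccohomology}. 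I would then invoke Lemma \ref{basislemmatoric}: the monomials of $\Wedge(x_v, g_e)$ that avoid the initial ideal generated by the leading terms above are exactly the nbc basis $P$, which has the right $\Q$-dimension in every degree. This confirms that the proposed set is a Gr\"obner basis.

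To reduce it to a quadratic Gr\"obner basis I would reuse Yuzvinsky's chordality argument. The type (ii) generators are already quadratic, so only the higher cycle relations need attention. If $|C| = m+1 > 3$, the broken circuit $\{e_1,\dots,e_m\}$ is non-nbc, so by Yuzvinsky's characterization it contains two edges $e, e'$ with a common head $v$. Perfect elimination makes $v$ simplicial in $\Gamma_v$, so $t(e)$ and $t(e')$ are joined by some edge $e''$, giving a triangle $T = \{e'', e, e'\}$ with $e''$ as its smallest edge; its relation $r_T$ has $\In(r_T) = g_e g_{e'}$, which divides $\In(r_C)$. Hence $r_C$ may be eliminated in favor of the triangle relations, leaving a quadratic Gr\"obner basis and therefore giving Koszulity of $H^*(X_\A)$. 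The main obstacle is the leading-term calculation in the first step: the relations of Theorem \ref{toriccohomology} are much more intricate than the Orlik--Solomon boundaries $\partial g_C$, and the $\psi$- and $g_{e_0}$-corrections must be carefully bookkept; the mixed ordering $x_v < g_e$ is precisely what ensures the combinatorial summand $g_{e_1}\cdots g_{e_m}$ always wins.
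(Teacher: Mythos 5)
Your overall architecture is exactly the paper's: exhibit the relations (ia)/(ib) and (ii) as a Gr\"obner basis by comparing the complement of the proposed initial ideal with the basis $P$ of Lemma \ref{basislemmatoric}, then use chordality to reduce to a quadratic Gr\"obner basis. However, the leading-term computation --- which you correctly identify as the crux --- contains a genuine error. In configuration (ia) the edge order induced by the vertex order is \emph{not} $e_0<e_1<\cdots<e_m$. Since $h(e)$ is by definition the larger vertex of $e$, the path satisfies $t(e_1)<h(e_1)=t(e_2)<h(e_2)<\cdots<h(e_m)$, so $h(e_1)<\cdots<h(e_m)=h(e_0)$ and the cycle's edges are ordered $e_1<e_2<\cdots<e_{m-1}<e_0<e_m$: the minimal edge is $e_1$, not $e_0$. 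Consequently the broken circuit of this cycle is $\{e_0,e_2,\dots,e_m\}$, and the leading monomial of $r_C$ is $g_{e_0}g_{e_2}\cdots g_{e_m}$ (which appears among the correction terms with $I=\{2,\dots,m\}$ and no $\psi$-factors); it beats $g_{e_1}\cdots g_{e_m}$ in degree-lex precisely because $e_0>e_{m-1}$. Your claim that the correction terms ``replace $g_{e_m}$ with $g_{e_0}$ and are therefore strictly smaller'' is wrong on both counts: the terms with $|I|=m-1$ replace an arbitrary $g_{e_j}$ with $g_{e_0}$, and doing so for $j<m$ produces a \emph{larger} monomial.

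This is not a cosmetic slip, because it propagates through both remaining steps. The set $\{e_1,\dots,e_m\}$ has pairwise distinct heads, so by Yuzvinsky's criterion it is an nbc set and $g_{e_1}\cdots g_{e_m}$ is an element of the basis $P$; if it lay in your initial ideal, the comparison with Lemma \ref{basislemmatoric} would fail rather than confirm the Gr\"obner property. Likewise, your quadratic reduction looks inside the (alleged) broken circuit $\{e_1,\dots,e_m\}$ for two edges with a common head, and there are none; it is the correct broken circuit $\{e_0,e_2,\dots,e_m\}$, with $h(e_0)=h(e_m)$, that supplies the triangle whose quadratic leading term $g_{e_0}g_{e_m}$ divides $\In(r_C)$. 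Once the leading terms are corrected to $\In(G)=\{x_{h(e)}g_e\}\cup\{g_B\ |\ B\text{ a broken circuit}\}$, the rest of your argument goes through and coincides with the paper's proof (which uses a slightly different, interleaved monomial order $x_{h(e)}<g_e<x_{h(e)+1}$, though your order $x_v<g_e$ for all $v,e$ also works).
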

\begin{proof}
Fix a degree-lexicographic order on $H^*(X_\A)$ that is induced by our order on $\V$. That is, $g_e<g_{e'}$ if $e<e'$, and $x_{h(e)}<g_e<x_{h(e)+1}$.
We will show that $$G=\left\{(x_{h(e)}-x_{t(e)})g_e,r_S \ |\ e\in\E,\text{ $S$ a circuit}\right\}$$
is a Gr\"obner basis with this order which can be reduced to a quadratic Gr\"obner basis.

 We have 
$$\In(G) = \left\{x_{h(e)}g_e,\ g_C\ |\ e\in\E,C\text{ is a broken circuit}\right\}.$$ 
Then $P$ is the set of monomials that are not in $\langle \In(G)\rangle$. Since $\langle\In(G)\rangle\subseteq\In(I)$, $P$ contains the monomials that are not in $\In(I)$. Since the set of monomials not in $\In(I)$ is a basis for $H^*(X_\A)$ contained in the basis $P$, and $H^*(X_\A)$ is finite dimensional, we must have equality throughout. That means that the monomials in $\langle\In(G)\rangle$ are exactly the monomials in $\In(I)$. Since these ideals are generated by monomials, they must be equal.  
Note that the relations of type (ii) are already quadratic. In a similar way as in the linear case, we can reduce our relations $r_C$ to quadratic ones as well. 
\end{proof}

\subsection{Abelian Arrangements}\label{abeliankoszulity}

Let $\Gamma=(\V,\E)$ be a chordal graph, and let $E$ be a complex elliptic curve. 
For the chordal abelian arrangement $\A=\A(\Gamma,E)$, consider the algebra $A(\A)$ from Theorem \ref{modelthm} (ignoring the differential). 
In this subsection, we will prove that $A(\A)$ is Koszul. 
The proof is very similar to (but slightly more complicated than) the toric case. 

\begin{lemma}
\label{choicelemmaabelian}
Let $\Gamma=(\V,\E)$ be a chordal graph. 
Let $F\subseteq\E$ be a flat of the arrangement $\A=\A(\Gamma, E)$, and let $S$ be a non-broken circuit associated to $F$.  
Define $I_F$ to be the ideal generated by 
$$\left\{x_{h(e)}-x_{t(e)}, y_{h(e)}-y_{t(e)}\ |\ e\in F\right\}$$ 
in $\Lambda(x_v,y_v\ |\ v\in\V)\cong H^*(E^\V)$, and let $H_F=\cap_{e\in F}H_e\subseteq E^\V$.
\begin{enumerate}
\item With the degree-lexicographic order and $x_v<y_v<x_{v'}<y_{v'}$ whenever $v<v'$, the set 
$G_S := \left\{ x_{h(e)}-x_{t(e)}, y_{h(e)}-y_{t(e)}\ |\ e\in S\right\}$
is a Gr\"obner basis for $I_F$.
\item The set $\{x_{i_1}\cdots x_{i_r}y_{j_1}\cdots y_{j_t}\ |\ h(e)\notin \{i_1,\dots,i_r,j_1,\dots,j_t\} \text{ for each } e\in S\}$ is a basis for $$H^*(H_F) \cong \Lambda(x_v,y_v\ |\ v\in\V)/I_F$$ and this basis does not depend on the choice of $\nbc$ set $S$.
\end{enumerate}
\end{lemma}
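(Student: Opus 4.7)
The plan is to adapt the proof of Lemma \ref{choicelemmatoric} essentially verbatim, exploiting the fact that the $x$-generators and $y$-generators of $I_F$ do not interact. The key observation is that one can treat them by two parallel applications of Gaussian elimination and then combine the results using the fact that, in the exterior algebra, the ideal generated by a subspace $W \subseteq V$ of degree-one elements has quotient $\Lambda(V)/\langle W\rangle \cong \Lambda(V/W)$ of dimension $2^{\dim V - \dim W}$.

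For part (1), I would form two coefficient matrices $M_F^x$ and $M_F^y$ exactly as in the toric proof: rows indexed by $e \in F$, columns indexed by $\V$ in decreasing order, with entries $1$ in column $h(e)$ and $-1$ in column $t(e)$, each matrix encoding one family of generators. Since $|S| = \rk F$ and the heads $\{h(e) \mid e \in S\}$ are pairwise distinct (as $S$ is nbc in a chordal graph; see Subsection \ref{linearkoszulity}), the same row reduction used in Lemma \ref{choicelemmatoric} leaves the rows indexed by $e \in S$ unchanged and zeros out the rest, putting each matrix into row echelon form. Under the interleaved order $x_v < y_v < x_{v'} < y_{v'}$, the leading term of each $x$-relation is an $x$-variable and that of each $y$-relation a $y$-variable, so the two reductions do not interfere; the combined set $\In(G_S) = \{x_{h(e)}, y_{h(e)} \mid e \in S\}$ consists of $2|S|$ distinct variables, and $G_S$ spans the same subspace of $V$ as the full generating set of $I_F$, establishing the Gr\"obner basis claim.

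For part (2), a dimension count finishes the argument: $H^*(H_F) \cong \Lambda(V)/I_F$ has dimension $2^{2|\V| - 2|S|}$, which exactly matches the cardinality of the proposed set of squarefree monomials in the $2(|\V|-|S|)$ variables avoiding $\{x_{h(e)}, y_{h(e)} \mid e \in S\}$; since by Gr\"obner basis theory the monomials outside $\langle \In(G_S)\rangle$ span the quotient and our set is a subset of the right size, it forms a basis. Independence of $S$ follows, as in Lemma \ref{choicelemmatoric}, from the equality $\{h(e) \mid e \in S\} = \{h(e) \mid e \in F\}$, which holds because $S$ spans $\Gamma[F]$. The one place this argument is not a formal duplicate of the toric case is the verification that the interleaved monomial order actually decouples the $x$- and $y$-reductions; this is the main (minor) obstacle I foresee, and it is built into the choice of ordering but deserves to be checked explicitly.
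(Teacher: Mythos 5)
Your proposal is correct and takes essentially the same route as the paper: the paper's proof also reduces to the toric matrix argument (packaged as a single block matrix with two copies of $M_F$ rather than two separate matrices, which is immaterial), obtains $\In(G_S)=\{x_{h(e)},y_{h(e)}\mid e\in S\}$, and reads off the basis of standard monomials, with independence of $S$ following from $\{h(e)\mid e\in S\}=\{h(e)\mid e\in F\}$. Your added dimension count in part (2) and the explicit remark that the interleaved order keeps the $x$- and $y$-reductions from interfering are harmless elaborations of the same argument.
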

\begin{proof}
Consider the matrix $M_F$ from the proof of Lemma \ref{choicelemmatoric}. Build a $2\times2$ block matrix, where the upper left and lower right blocks are copies of $M_F$ and the other blocks are zero. In the upper half of the matrix, row $e$ corresponds to $x_{h(e)}-x_{t(e)}$, and in the lower half of the matrix, row $e$ corresponds to $y_{h(e)}-y_{t(e)}$. By a similar argument as before, we can eliminate rows that don't correspond to elements of $S$ and we're left with a matrix in row echelon form.
Thus, we have a Gr\"obner basis.

The proof of the second statement mimics the proof in the toric case, with $\In(G_S)=\{x_{h(e)},y_{h(e)}\ |\ e\in S\}$.
\end{proof}

\begin{lemma}
\label{basislemmaabelian}
Let $\Gamma=(\V,\E)$ be a chordal graph, and let $\A=\A(\Gamma,E)$. 
Define $P$ to be the set of all monomials $x_Ay_Bg_C$ such that $C$ is a non-broken circuit and $h(e)\notin(A\cup B)$ for all $e\in C$. Then $P$ is a basis for $A(\A)$.
\end{lemma}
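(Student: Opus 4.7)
The plan is to mimic the proof of Lemma \ref{basislemmatoric} almost verbatim, replacing the role of Lemma \ref{choicelemmatoric} with Lemma \ref{choicelemmaabelian}. The essential input is a flat decomposition of $A(\A)$ (as a graded vector space, ignoring the differential) analogous to the one used in the toric case.

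First, I would invoke the decomposition of $A(\A)$ into summands indexed by flats. For each flat $F$ of the underlying matroid, let $H_F=\cap_{e\in F}H_e\subseteq E^\V$ and let $V_F$ be the subspace of $A(\A)$ spanned by all $g_C$ where $C$ is an $\nbc$ set associated to $F$. By \cite[Lemma~3.1]{bibby}, we have a direct sum decomposition
\[
A(\A)=\bigoplus_F A_F,\qquad A_F\cong H^*(H_F)\otimes V_F,
\]
where the $H^*(H_F)$-factor accounts for the subalgebra generated by the $x_v,y_v$ modulo the relations of type (ii) restricted to $F$. This reduces the problem to showing, for each flat $F$, that $P\cap A_F$ is a basis of $A_F$.

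Next, I would fix a flat $F$ and describe $P\cap A_F$ explicitly. An element $x_Ay_Bg_C\in P$ lies in $A_F$ precisely when $C$ is an $\nbc$ set associated to $F$, so
\[
P\cap A_F=\{\,x_Ay_Bg_C\mid C\text{ is an $\nbc$ set associated to }F,\ h(e)\notin A\cup B\text{ for all }e\in C\,\}.
\]
For a fixed such $C$ (there are $\dim V_F$ choices), the accompanying monomials $x_Ay_B$ range precisely over the basis given in part (2) of Lemma \ref{choicelemmaabelian}: since $C$ spans $\Gamma[F]$, the condition $h(e)\notin A\cup B$ for $e\in C$ agrees with $h(e)\notin A\cup B$ for $e\in F$, so these monomials form a basis for $H^*(H_F)$ independent of the chosen $C$. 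Tensoring this basis with the basis $\{g_C\}$ of $V_F$ gives a basis of $A_F\cong H^*(H_F)\otimes V_F$ that coincides with $P\cap A_F$.

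Summing over flats then yields the claim. The only real subtlety is the first step: confirming that the elliptic analogue of the De~Concini--Procesi flat decomposition holds at the level of the algebra $A(\A)$ from Theorem \ref{modelthm} and not merely for $H^*(X_\A)$. This is precisely what \cite[Lemma~3.1]{bibby} provides, so the argument goes through as in the toric case without further difficulty; everything else is a bookkeeping combination of that decomposition with Lemma \ref{choicelemmaabelian}.
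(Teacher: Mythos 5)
Your proposal is correct and follows essentially the same route as the paper: invoke the flat decomposition $A(\A)=\oplus_F H^*(H_F)\otimes V_F$ from \cite[Lemma~3.1]{bibby}, identify $P\cap A_F$, and conclude via Lemma \ref{choicelemmaabelian}. The extra detail you supply about why the $x_Ay_B$-factor is independent of the chosen $\nbc$ set $C$ is exactly the content of part (2) of that lemma, so nothing is missing.
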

\begin{proof}
By Lemma 3.1 in \cite{bibby}, there is a decomposition into the flats of $\A$, given by the following:
For a flat $F$, let $H_F=\cap_{e\in F}H_e\subseteq E^\V$, and let $V_F$ be the vector space spanned by $g_C$ for all $\nbc$ sets $C$ associated to $F$. 
Then $$A(\A) = \oplus_F H^*(H_F)\otimes V_F.$$
Denote $H^*(H_F)\otimes V_F$ by $A_F$. 
To show that $P$ is a basis for $A(\A)$, it suffices to show that 
$$P\cap A_F=\{x_Ay_Bg_C\ |\ h(c)\notin (A\cup B)\text{ for }c\in C, C\text{ is an nbc set associated to }F\}$$
is a basis for $A_F$.
But this follows from Lemma \ref{choicelemmaabelian}.
\end{proof}

\begin{theorem} 
\label{koszuldgathmabelian}
Let $\A$ be a chordal abelian arrangement. Then $A(\A)$ is Koszul. 
\end{theorem}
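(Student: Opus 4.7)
The plan is to mimic the Gr\"obner basis argument from the toric case (Theorem~\ref{koszuldgathmtoric}), using Lemma~\ref{basislemmaabelian} in place of its toric analogue. First, fix a degree-lexicographic order on the exterior algebra $\Wedge(x_v,y_v,g_e)$ induced by the perfect elimination ordering of $\V$: set $x_v<y_v<x_{v'}<y_{v'}$ whenever $v<v'$, and place each $g_e$ between $y_{h(e)}$ and $x_{h(e)+1}$, so that $g_e<g_{e'}$ whenever $e<e'$. With
\[G=\{(x_{h(e)}-x_{t(e)})g_e,\ (y_{h(e)}-y_{t(e)})g_e,\ \partial g_S\mid e\in\E,\ S\text{ a circuit}\}\]
as a generating set for the defining ideal $I$ of $A(\A)$, the leading terms are
\[\In(G)=\{x_{h(e)}g_e,\ y_{h(e)}g_e,\ g_{C'}\mid e\in\E,\ C'\text{ a broken circuit}\}.\]

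Next, I would verify that $G$ is a Gr\"obner basis by the standard basis-counting argument. By inspection, the standard monomials for $\langle\In(G)\rangle$ are exactly the elements of the set $P$ of Lemma~\ref{basislemmaabelian}. Since $\langle\In(G)\rangle\subseteq\In(I)$ holds automatically, the standard monomials for $\In(I)$ form a $\Q$-basis of $A(\A)$ contained in $P$; on the other hand $P$ is itself a basis of $A(\A)$ by Lemma~\ref{basislemmaabelian}. Finite-dimensionality of $A(\A)$ then forces these two sets of monomials to coincide, so $\In(I)=\langle\In(G)\rangle$, and hence $G$ is a Gr\"obner basis for $I$.

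Finally, I would reduce $G$ to a quadratic Gr\"obner basis. The type (ii) relations are already quadratic. For any cycle $C$ of length greater than three, the chordal hypothesis (via Proposition~6.19 of \cite{yuz}) produces distinct edges $e,e'\in C$ with $h(e)=h(e')$; the pair $\{e,e'\}$ is then a broken circuit contained in some triangle $T$, and $\In(\partial g_T)=g_eg_{e'}$ divides $\In(\partial g_C)$, so the longer cycle relation is redundant. What remains is a quadratic Gr\"obner basis, and hence $A(\A)$ is Koszul.

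I do not anticipate a genuine obstacle in executing this plan: once Lemmas~\ref{choicelemmaabelian} and \ref{basislemmaabelian} are in hand, the argument is a direct translation of the toric proof, and is in fact slightly cleaner, since the cycle relations in $A(\A)$ are the ordinary Orlik--Solomon boundary relations rather than the more elaborate (ia)/(ib) relations appearing in the toric case. The only point requiring any care is choosing a monomial order that treats the $x$- and $y$-linear relations symmetrically so that both $x_{h(e)}g_e$ and $y_{h(e)}g_e$ arise cleanly as leading terms, which the ordering above handles.
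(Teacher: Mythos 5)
Your proposal is correct and follows essentially the same route as the paper: the same monomial order, the same generating set $G$ with the same leading terms, the same basis-counting argument via Lemma~\ref{basislemmaabelian} to establish the Gr\"obner basis property, and the same chordality-based reduction to a quadratic Gr\"obner basis. You have merely written out in full the steps the paper compresses into ``by an argument similar to that in the toric case.''
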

\begin{proof}
Fix a degree-lexicographic order on $A(\A)$ that is induced by our order on $\V$. That is, $g_e<g_{e'}$ if $e<e'$, and $x_{h(e)}<y_{h(e)}<g_e<x_{h(e)+1}<y_{h(e)+1}$.
We claim that $$G=\left\{(x_{h(e)}-x_{t(e)})g_e,(y_{h(e)}-y_{t(e)})g_e,\partial g_S \ |\ e\in\E,\text{ $S$ a circuit}\right\}$$
is a Gr\"obner basis with this order.
Here, 
$$\In(G) = \left\{x_{h(e)}g_e,\ y_{h(e)}g_e,\ g_C\ |\ e\in\E,C\text{ is a broken circuit}\right\}$$ 
and $P$ from Lemma \ref{basislemmaabelian} is the set of monomials not in $\langle\In(G)\rangle$. By an argument similar to that in the toric case, we can conclude that $G$ is a Gr\"obner basis. Moreover, using the fact that we have a chordal graph, we can again reduce this (in the same way) to a quadratic Gr\"obner basis, thus proving Koszulity. 
\end{proof}

\subsection{Higher Genus Curves}\label{highergenuskoszulity}

Let $\Gamma=(\V,\E)$ be a chordal graph, and let $C$ be a complex projective curve of genus $g>1$. For the chordal arrangement $\A=\A(\Gamma,C)$, consider the algebra $A(\A)$ from Theorem \ref{modelthm2} (ignoring the differential).  In this subsection, we will prove that $A(\A)$ is Koszul. The proof is very similar to that of the abelian case in Section \ref{abeliankoszulity}. 

\begin{lemma}
\label{choicelemmahighergenus}
Let $\Gamma=(\V,\E)$ be a chordal graph. Let $F\subseteq\E$ be a flat of the arrangement $\A=\A(\Gamma,C)$, and let $S$ be a non-broken circuit associated to $F$. 
Denote $H_F = \cap_{e\in F}H_e \subseteq C^\V$. 
Then $H^*(H_F) \cong \Lambda(x_v^i,y_v^i\ |\ v\in\V, i=1,\dots,g)/I_F$ where $I_F$ is the ideal generated by the relations
\begin{enumerate}[(i)]
\item $x_{h(e)}^i-x_{t(e)}^i$, $y_{h(e)}^i-y_{t(e)}^i$ for $e\in F$,
\item $x_v^ix_v^j$, $y_v^iy_v^j$, $x_v^iy_v^j$ for $i\neq j$, and 
\item $x_v^iy_v^i-x_v^jy_v^j$.
\end{enumerate}
This algebra has basis $$\{x^1_{A_1}\cdots x^g_{A_g}y^1_{B_1}\cdots y^g_{B_g}\ |\ A_i\cap B_i=\emptyset\text{ for }i>1; \{h(e)\ |\ e\in S\}\cap(A_i\cup B_i)=\emptyset \forall i\}$$ and this basis does not depend on the choice of $S$. 
\end{lemma}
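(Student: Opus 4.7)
The approach parallels Lemmas \ref{choicelemmatoric} and \ref{choicelemmaabelian}, but with the new wrinkle that the local cohomology at each vertex is $H^*(C) = \Lambda(x^i, y^i : i=1,\dots,g)/\langle(\text{iiia}),(\text{iiib})\rangle$, a nontrivial quotient of an exterior algebra rather than just an exterior algebra. My plan is to identify the presentation geometrically and then extract the stated basis by the same ``eliminate the heads of $S$'' device used in the earlier lemmas. First, observe that the subvariety $H_F \subseteq C^\V$ is cut out by the equations $x_{h(e)} = x_{t(e)}$ for $e \in F$, so two coordinates agree on $H_F$ iff the corresponding vertices lie in the same connected component of $(\V, F)$, with isolated vertices of $\V \setminus V(\Gamma[F])$ forming singleton components. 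Writing $W$ for the set of components, $H_F \cong C^W$, and by K\"unneth $H^*(H_F) \cong H^*(C)^{\otimes W}$.

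For the presentation, I would identify the restriction $H^*(C^\V) \twoheadrightarrow H^*(H_F)$ with the quotient by $I_F$. Restriction sends $x^i_v \mapsto x^i_{[v]}$ and $y^i_v \mapsto y^i_{[v]}$, so its kernel contains the differences in (i); and since (ii), (iii) already hold in $H^*(C^\V)$, the ideal $I_F$ is contained in the kernel. Conversely, the quotient $\Lambda(x^i_v, y^i_v)/I_F$ maps onto $H^*(C)^{\otimes W}$: relations (i) implement the collapse of coordinates along each component, and relations (ii), (iii) are exactly the K\"unneth relations of $H^*(C)$ at each factor. A dimension comparison (or an explicit inverse on monomials) will then show this map is an isomorphism.

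For the basis and its $S$-independence, I would reuse the combinatorial observation from the toric and abelian cases. Because $S$ is an nbc set associated to $F$, its edges form a spanning forest of $\Gamma[F]$ with pairwise distinct heads; counting per component yields $\{h(e) \mid e \in S\} = \{h(e) \mid e \in F\}$, namely the set of non-minimum vertices of each component of $\Gamma[F]$. Hence $W_0 := \V \setminus \{h(e) \mid e \in S\}$ consists of the minimum vertex of each component together with the isolated vertices of $\V \setminus V(\Gamma[F])$, and depends only on $F$. Using relations (i) to eliminate $x^i_{h(e)}, y^i_{h(e)}$ for $e \in S$ in favor of variables supported on $W_0$ identifies $\Lambda(x^i_v, y^i_v)/I_F$ with $\bigotimes_{v_0 \in W_0} H^*(C)$. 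The standard basis of $H^*(C)$ is $\{1\} \cup \{x^i, y^i : 1 \le i \le g\} \cup \{x^1 y^1\}$, where (iiib) justifies taking $x^1 y^1$ as the unique representative top class. Taking tensor products over $W_0$ then produces precisely the monomials $x^1_{A_1} \cdots x^g_{A_g} y^1_{B_1} \cdots y^g_{B_g}$ with $A_i, B_j \subseteq W_0$ and $A_i \cap B_i = \emptyset$ for $i > 1$, as claimed.

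The main obstacle I anticipate is cleanly disentangling the linear ``identification'' relations (i) from the quadratic ``curve'' relations (ii), (iii). A pure Gr\"obner-basis argument would require a single monomial order handling both simultaneously, with careful tracking of leading terms and of the joint initial ideal. The geometric route sketched above sidesteps this by reducing, after eliminating variables via (i), to the known presentation of $H^*(C)$ on each factor of $C^W$; this keeps the argument short and makes the $S$-independence of the basis self-evident.
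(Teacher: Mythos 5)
Your proposal is correct and follows essentially the same route as the paper: both first use relations (i) to eliminate the variables indexed by $\{h(e)\mid e\in S\}$ (noting this set equals $\{h(e)\mid e\in F\}$ and hence is independent of $S$), and then read off the basis from the per-vertex structure of $H^*(C)$ --- the paper phrases this second step as a Gr\"obner basis for relations (ii) and (iii) in the smaller exterior algebra, whose non-initial monomials are exactly your tensor-product basis $\{1,\,x^i,\,y^i,\,x^1y^1\}$ at each surviving vertex. Your additional K\"unneth/restriction argument for the presentation $H^*(H_F)\cong\Lambda/I_F$ fills in a point the paper leaves implicit, but does not change the substance of the proof.
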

\begin{proof}
Consider the exterior algebra modulo the first relation, which we can write as the exterior algebra
$$\Lambda(x^i_v,y^i_v\ |\ v \notin\{h(e)\ |\ e\in S\},i=1,\dots,g)$$
by a similar argument as in the proof of Lemma \ref{choicelemmaabelian}. Note that as before $\{h(e)\ |\ e\in S\}=\{h(e)\ |\ e\in F\}$, and so this does not depend on the choice of $S$. Now consider relations (ii) and (iii) in this algebra. This is a Gr\"obner basis $G$ with 
$$\In(G) = \{
x_v^ix_v^j, y_v^iy_v^j, x_v^iy_v^j (i\neq j), x_v^iy_v^i (i>1), v\notin \{h(e)\ |\ e\in S\} \}$$
The set of monomials in our proposed basis are exactly those not divisible by $\In(G)$ and are hence a basis.
\end{proof}

\begin{lemma}
\label{basislemmahighergenus}
Let $\Gamma=(\V,\E)$ be a chordal graph, and let $\A=\A(\Gamma,C)$. Define $P$ to be the set of all monomials $x^1_{A_1}\cdots x^g_{A_g}y^1_{A_1}\cdots y^g_{A_g}g_S$ such that $S$ is a non-broken circuit, $h(e)\notin (A_i\cup B_i)$ for all $e\in S$ and all $i$, and $A_i\cap B_i=\emptyset$ for $i>1$. Then $P$ is a basis for $A(\A)$.
\end{lemma}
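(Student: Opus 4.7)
The plan is to mirror the structure of the proofs of Lemmas \ref{basislemmatoric} and \ref{basislemmaabelian} almost verbatim, using Lemma \ref{choicelemmahighergenus} in place of its toric and abelian analogues. First I would invoke, or verify, a flat decomposition
\[
A(\A) = \bigoplus_F H^*(H_F) \otimes V_F,
\]
where the sum runs over flats $F$ of the matroid of $\Gamma$, $H_F = \cap_{e \in F} H_e \subseteq C^\V$, and $V_F$ is the $\Q$-span of $\{g_S : S \text{ is an } \nbc \text{ set associated to } F\}$. This is the statement of \cite[Lemma~3.1]{bibby} in the elliptic setting, and I expect the same argument to carry over to higher genus curves once one reads off the cohomology of $H_F$ from Lemma \ref{choicelemmahighergenus} instead of from its elliptic counterpart.

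Having such a decomposition in hand, it suffices to show that for each flat $F$ the intersection $P \cap A_F$ (writing $A_F := H^*(H_F) \otimes V_F$) is a basis for $A_F$. By definition of $V_F$, the $g_S$ with $S$ an $\nbc$ set associated to $F$ form a basis of $V_F$, so the task reduces to checking that for each such $S$ the set
\[
\bigl\{x^1_{A_1} \cdots x^g_{A_g} y^1_{B_1} \cdots y^g_{B_g} : A_i \cap B_i = \emptyset \text{ for } i > 1,\ \{h(e) : e \in S\} \cap (A_i \cup B_i) = \emptyset \bigr\}
\]
is a basis of $H^*(H_F)$. But this is exactly the content of Lemma \ref{choicelemmahighergenus}(2), whose final clause (independence of the basis from the choice of $S$) also ensures that the subsets of $P$ assigned to different $\nbc$ sets associated to the same flat $F$ are consistent.

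The main obstacle I anticipate is justifying the flat decomposition in the higher genus case. The elliptic argument exhibits an explicit splitting of the model, and one must check that the same formulas respect the additional relations (iiia) and (iiib) of Theorem \ref{modelthm2} among the $x_v^i$ and $y_v^i$; these relations are new relative to the elliptic situation and control the cohomology of the individual factors $C$ inside $C^\V$. Once this compatibility is confirmed, the rest of the argument is purely formal: $P \cap A_F$ is, by construction, the tensor product of the basis of $H^*(H_F)$ from Lemma \ref{choicelemmahighergenus}(2) with the basis $\{g_S\}$ of $V_F$, and summing over $F$ yields $P$ as a basis of $A(\A)$.
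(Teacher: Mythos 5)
Your proposal is correct and follows essentially the same route as the paper: reduce to the flat decomposition $A(\A)=\oplus_F H^*(H_F)\otimes V_F$ and then apply Lemma \ref{choicelemmahighergenus} to each summand. The one point you flag as a potential obstacle---justifying the decomposition in the presence of relations (iiia) and (iiib)---is asserted without further argument in the paper as well, so your caution there is reasonable but does not constitute a divergence in method.
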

\begin{proof}
There is again a decomposition into the flats of $\A$, given by the following: For a flat $F$, let $H_F=\cap_{e\in F}H_e\subseteq C^\V$, and let $V_F$ be the vector space spanned by $g_S$ for all $\nbc$ sets $S$ associated to $F$. Then $$A(\A) = \oplus_FH^*(H_F)\otimes V_F.$$
Denote $H^*(H_F)\otimes V_F$ by $A_F$. To show that $P$ is a basis for $A(\A)$, it suffices to show that $P\cap A_F$ is a basis of $A_F$. But this follows from Lemma \ref{choicelemmahighergenus}. 
\end{proof}

\begin{theorem}
\label{koszuldgathmhighergenus}
Let $C$ be a complex projective curve of genus $g>1$, and let $A$ be a chordal arrangement in $C^\V$. Then $A(\A)$ is Koszul. 
\end{theorem}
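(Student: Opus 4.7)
The proof plan mirrors that of Theorem \ref{koszuldgathmabelian}, with bookkeeping for the extra generators $x_v^i, y_v^i$ and for the new relations of types (iiia) and (iiib). The overall strategy is to exhibit a Gr\"obner basis for the defining ideal of $A(\A)$ whose initial monomials are exactly those complementary to the basis $P$ from Lemma \ref{basislemmahighergenus}, and then to reduce it to a quadratic one using chordality.

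Step one is to fix a degree-lexicographic order refining the perfect elimination order on vertices. I would declare $x_v^i < y_v^i < x_v^{i+1}$ for each $i < g$, place $g_e$ just above the generators attached to $h(e)$, and put all generators attached to $v$ before any generator attached to a larger vertex. Under this order, the leading term of $x_v^i y_v^i - x_v^j y_v^j$ for $i < j$ is $x_v^j y_v^j$, matching the initial ideal computed inside the proof of Lemma \ref{choicelemmahighergenus}.

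Step two is to take $G$ to be the entire defining set of relations from Theorem \ref{modelthm2}, and to read off
$$\In(G) = \{x_{h(e)}^i g_e,\ y_{h(e)}^i g_e,\ x_v^i x_v^j,\ y_v^i y_v^j,\ x_v^i y_v^j\ (i\neq j),\ x_v^j y_v^j\ (j>1),\ g_C\ (C\text{ broken})\}.$$
By direct inspection, a monomial in the exterior algebra fails to be divisible by any element of $\In(G)$ precisely when it lies in the set $P$ of Lemma \ref{basislemmahighergenus}. Since $\langle \In(G)\rangle \subseteq \In(I)$, the Gr\"obner monomial basis (the complement of $\In(I)$) is contained in $P$; since $P$ is also a basis of $A(\A)$ by Lemma \ref{basislemmahighergenus}, and each graded piece is finite-dimensional, they coincide. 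Hence $G$ is a Gr\"obner basis for $I$.

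Step three is the reduction to a quadratic Gr\"obner basis, which follows exactly the argument of Subsection \ref{linearkoszulity}. The only non-quadratic elements of $G$ are $\partial g_S$ with $|S| \geq 4$; for any such circuit $C$, the two edges of $C$ incident to its maximum vertex $v_{\max}$ share a head and are both strictly greater than $\min C$, so by chordality (i.e., simpliciality of $v_{\max}$ in $\Gamma_{v_{\max}}$) they sit inside a three-cycle $T$. Then $\In(\partial g_T)$ divides $\In(\partial g_C)$, so $\partial g_C$ is redundant. The resulting reduced basis is quadratic, and Koszulity of $A(\A)$ follows. The main obstacle I anticipate is the verification in step two that $P$ is exactly the set of monomials avoiding $\In(G)$: one must check that the three separate conditions defining $P$ — disjointness $A_i \cap B_i = \emptyset$ for $i > 1$, head-avoidance $\{h(e) \mid e \in S\} \cap (A_i \cup B_i) = \emptyset$, and the nbc condition on $S$ — correspond respectively to non-divisibility by the type (iiib), type (i)/(ii), and Orlik--Solomon initial monomials, with no additional divisibilities overlooked.
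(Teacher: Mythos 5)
Your proposal is correct and follows essentially the same route as the paper: the same Gr\"obner basis $G$ built from the defining relations, the same identification of the monomials avoiding $\In(G)$ with the basis $P$ of Lemma \ref{basislemmahighergenus} to conclude $\In(I)=\langle\In(G)\rangle$, and the same chordality-based reduction to a quadratic Gr\"obner basis. If anything, your list of initial terms is the more careful one --- the leading term of $x_v^iy_v^i-x_v^jy_v^j$ ($i<j$) is $x_v^jy_v^j$ with $j>1$, consistent with Lemma \ref{choicelemmahighergenus}, whereas the displayed $\In(G)$ in the paper's proof abbreviates (and slightly garbles) this list.
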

\begin{proof}
Fix a degree-lexicographic order on $A(\A)$ that is induced by our order on $\V$. We claim that 
$$G = \{ (x^i_{h(e)}-x^i_{t(e)})g_e,(y^i_{h(e)}-y^i_{t(e)})g_e, \partial g_S, R\ |\ e\in\E, S \text{ a circuit} \}$$
is a Gr\"obner basis with this order, where $R$ denotes the set of relations (iiia) and (iiib) in $A(\A)$. Here,
$$\In(G) = \{x_v^1y_v^1, x_{h(e)}^ig_e, y_{h(e)}^ig_e, g_B\ |\ B \text{ a broken circuit}\} $$
and $P$ from Lemma \ref{basislemmahighergenus} is the set of monomials not in $\langle\In(G)\rangle$. By an argument similar to the previous cases, we can conclude that $G$ is a Gr\"obner basis. Moreover, using the fact that we have a chordal graph, we can again reduce this (in the same way) to a quadratic Gr\"obner basis, thus proving Koszulity. 
\end{proof}

\section{Rational Homotopy Theory and Quadratic Duality}\label{dualitysection}

In this section, we collect definitions and  results from rational homotopy theory, quadratic duality, and the relationship between these two subjects. 
This section is meant to provide a background on the necessary theory; the reader can skip ahead and refer back as needed. 
Throughout this section, all DGAs will be assumed to be connective (that is, their cohomology has a nonnegative grading). Except in Subsection \ref{dualitysubsection}, all DGAs will be commutative (that is, graded-commutative).

\subsection{Rational Homotopy Theory}
\label{ratlhomotopysubsection} 
The fundamental problem of rational homotopy theory is to understand the topology of the $\Q$-completion $X \to \Q_{\infty}(X)$ of a topological space $X$ as defined in \cite[Chapter I.4]{bousfieldkan}. When $X$ is a simply connected CW-complex, we have $\pi_i \Q_{\infty}(X) \cong (\pi_i X) \otimes \Q$ and $H^*(X, \Q) \cong H^*(\Q_{\infty}(X), \Q)$, but in general the relationship between $X$ and $\Q_{\infty}(X)$ is more complicated.
Still, the homotopy type of $\Q_{\infty}(X)$ is substantially simpler than that of $X$ as the results of \cite{quillen, sullivan, bousfield} show that the rational homotopy theory of connected $\Q$-finite spaces is determined by the quasi-isomorphism type of a particular DGA $(A_{PL}(X), d)$ with $H^*(A_{PL}(X), d) \cong H^*(X, \Q)$.
A DGA $(A(X),d)$ is a \textbf{model} for $X$ if it is quasi-isomorphic to $(A_{PL}(X),d)$. The space $X$ is \textbf{formal} if $(H^*(X,\Q),0)$ is a model for $X$. 

Let $(B,d)$ be a DGA and for $n \geq 0$ let $(B(n), d)$ be the DG-subalgebra of $B$ generated by $B^i$ for $i \leq n$. Define $(B(-1), d)$ to be the DG-subalgebra generated by $1 \in B^0$. For $n \geq 0$, there is an increasing filtration $(B(n,q), d)$ on $(B(n), d)$ defined inductively as follows: Let $(B(n,0), d) = (B(n-1), d)$ and let $B(n,q+1)$ be the DG-subalgebra of $B$ generated by $B(n-1)$ and $\{b \in B^n | db \in B(n, q) \}$. A commutative DGA $(B,d)$ is \textbf{minimal} \cite[Section~7.1]{bousfield} if it is connected, $B$ is a free commutative graded algebra, and $B(n) = \cup_{q \geq 0} B(n,q)$ for all $n$. 

Sullivan \cite{sullivan} showed that any homologically connected DGA has a minimal model that is unique up to unnatural isomorphism. Write $(M(X), d)$ for the minimal model of $(A_{PL}(X), d)$, which is called the minimal model of $X$. Every minimal DGA $(M, d)$ has a canonical augmentation determined by the augmentation ideal $M^{+}$. This lets us define the homotopy groups $\pi^q M = H^q(M^+/(M^+ \cdot M^+), d)$ of $(M, d)$. The following theorem relates the homotopy groups of $(M(X), d)$ to those of $\Q_{\infty}(X)$.
\begin{theorem} \emph{\cite[Theorem~12.8]{bousfield}} \label{ratlhomotopyprop}
There are natural bijections $$\pi_q \Q_{\infty}(X) \cong \Hom_{\Q}(\pi^q M(X) , \Q).$$ 
They are group isomorphisms for $q \geq 2$.
\end{theorem}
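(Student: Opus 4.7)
The plan is to prove this by combining the Sullivan--de Rham adjunction between simplicial sets and commutative DGAs with an inductive computation of the homotopy groups of the spatial realization of a minimal model. The right adjoint to $A_{PL}$ is the spatial realization $\langle \cdot \rangle$, which sends a CDGA $B$ to the simplicial set whose $n$-simplices are DGA maps $B \to A_{PL}(\Delta^n)$. First I would verify that for a connected, $\Q$-good space $X$ there is a natural weak equivalence $\Q_{\infty}(X) \simeq \langle A_{PL}(X) \rangle$; composing with the quasi-isomorphism $M(X) \xrightarrow{\sim} A_{PL}(X)$ and using that $\langle \cdot \rangle$ converts quasi-isomorphisms of minimal CDGAs into weak equivalences reduces the problem to computing $\pi_q \langle M(X) \rangle$.

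The next step is to compute $\pi_q \langle M \rangle$ for a minimal CDGA $M$ via its Sullivan filtration $\{M(n,q)\}$. By construction, each inclusion $M(n,q) \hookrightarrow M(n,q+1)$ is obtained by freely adjoining generators of degree $n$ whose differentials lie in $M(n,q)$, so the induced map on spatial realizations is a principal fibration whose fiber is a product of copies of $K(\Q, n)$, one for each new generator. A direct calculation for the free CDGA $\Lambda(v)$ on a single closed generator $v$ in degree $n$ gives $\langle \Lambda(v) \rangle \simeq K(\Q, n)$, with the canonical identification $\pi_n \cong \Hom_{\Q}(\Q \cdot v, \Q)$ provided by integration of $v$ over the fundamental cycle. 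Induction over the filtration using the long exact sequence of each principal fibration then yields the natural bijection $\pi_q \langle M \rangle \cong \Hom_{\Q}(\pi^q M, \Q)$, the indecomposable quotient $\pi^q M = H^q(M^+/(M^+ \cdot M^+), d)$ recording precisely the new generators added at each stage.

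The main obstacle is the discrepancy in group structures in degree one. For $q \geq 2$ every stage of the filtration has simply connected fiber, all relevant Eilenberg--MacLane spaces are themselves simply connected, and the additive structure on the dual transfers compatibly through the fibration sequences to give a genuine group isomorphism. For $q = 1$, however, $\pi_1 \Q_{\infty}(X)$ is in general a nonabelian nilpotent pro-$\Q$ group governed by its Malcev Lie algebra, whereas $\Hom_{\Q}(\pi^1 M, \Q)$ is a priori only an abelian $\Q$-vector space; the bijection of underlying sets is obtained by filtering $M$ in degree one so that successive quotients correspond to the lower central series of $\pi_1 \Q_{\infty}(X)$, and the Baker--Campbell--Hausdorff formula identifies the underlying sets without transporting the group law. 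Beyond this, the main bookkeeping is checking naturality of the bijection across the filtration and with respect to maps of spaces.
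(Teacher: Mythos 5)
This theorem is not proved in the paper at all: it is quoted directly from \cite[Theorem~12.8]{bousfield} (Bousfield--Gugenheim), so there is no internal argument to compare yours against. Your sketch --- spatial realization as the right adjoint to $A_{PL}$, the tower of principal fibrations with $K(\Q,n)$ fibers induced by the filtration $M(n,q)$, the identification $\pi_q\langle M\rangle \cong \Hom_{\Q}(\pi^q M,\Q)$ by induction up that tower, and the separate treatment of $q=1$ via central extensions and Baker--Campbell--Hausdorff --- is essentially the proof given in that reference, and I see no gap at the level of detail you supply. The one step you should not treat as a routine verification is the natural weak equivalence $\Q_{\infty}(X)\simeq\langle A_{PL}(X)\rangle$: for non-nilpotent $X$ this is itself the main comparison theorem of Bousfield--Gugenheim, proved by matching the realization tower $\{\langle M(n,q)\rangle\}$ against the $\Q$-nilpotent completion tower under finite-type hypotheses, rather than a formal consequence of the adjunction.
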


In the next subsection we develop the technology to get more refined information about $\pi_1 X$ from the minimal model. This will be important because the spaces we are interested in are rational $K(\pi,1)$ spaces. 

\subsection{Complete Lie Algebras and Nilpotent Completion of Groups}
When  $X$ is not simply connected, we don't necessarily have the isomorphism $\pi_1\Q_\infty(X)\cong(\pi_1 X)\otimes \Q$. To even make sense of the right hand side when $\pi_1X$ isn't abelian, 
we need to review the \textbf{Malcev completion} (or $\Q$-nilpotent completion) $\hat{G} \otimes \Q$ of a finitely presented group $G$. Then we will survey some results that show that $\hat{G} \otimes \Q$ is entirely determined by a complete Lie algebra $L(G)$, which we call the \textbf{Malcev Lie algebra} of $G$. 

Recall that the lower central series of $G$ is defined by setting $\Gamma_1 G = G$ and $\Gamma_{q+1} G = [G, \Gamma_q G]$. As in \cite[Section~13.2]{griffithsmorgan}, we will use a recursive procedure to define $N_i G\otimes\Q$ for each nilpotent group $N_i G = G / \Gamma_i G$ and then define $$\hat{G} \otimes \Q := \varprojlim (N_i G) \otimes \Q.$$
First we see that $N_1 G = 0$ so we can define $(N_1 G) \otimes \Q = 0$. Now assume that we have defined $(N_{i-1} G) \otimes \Q$. The $N_i G$ fit into a series of exact sequences $$0 \to \Gamma_{i-1} / \Gamma_i G \to N_i G \to N_{i-1} G \to 0$$ which determine classes $\sigma_i \in H^2(N_{i-1}G, \Gamma_{i-1} / \Gamma_i G)$ (where $\Gamma_{i-1} / \Gamma_i G$ is given a trivial $N_{i-1} G$-module structure). It can be shown that $$H^2(N_{i-1}, \Gamma_{i-1} / \Gamma_i G) \otimes \Q \cong H^2((N_{i-1} G) \otimes \Q, (\Gamma_{i-1} / \Gamma_i G) \otimes \Q)$$ so the class $\sigma_i \otimes 1$ determines an extension 
of $(N_{i-1}G)\otimes\Q$ by $(\Gamma_{i-1}/\Gamma_iG)\otimes\Q$. We then define this extension to be $N_iG\otimes \Q$. 
 
For a minimal DGA $(M, d)$ the analogue of the lower central series of $\pi_1 X$ is an increasing filtration on $\pi^1 M$ defined by $\Gamma^i \pi^1 = \Ima (\pi^1 M(1,i-1) \to \pi^1 M)$.
\begin{theorem} \emph{\cite[Theorem~12.8]{bousfield}}
Let $(M(X),d)$ be the minimal model of $X$. Then 
$$\Hom_{\Q}(\pi^1 M(X) / \Gamma^i \pi^1 M(X), \Q) \cong (N_i \pi_1 X) \otimes \Q.$$
\end{theorem}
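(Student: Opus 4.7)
The plan is to recognize both sides as $\Q$-dual descriptions of the Malcev completion of $\pi_1 X$, with filtrations that match under the duality. This theorem refines the unfiltered bijection $\Hom_\Q(\pi^1 M(X), \Q) \cong \pi_1 \Q_\infty(X)$ guaranteed by Theorem \ref{ratlhomotopyprop}. Since $\pi_1 \Q_\infty(X)$ coincides with the Malcev completion $\widehat{\pi_1 X} \otimes \Q$, the content is that the Sullivan filtration $\{\Gamma^i \pi^1 M(X)\}$ is $\Q$-dual to the lower central series filtration on $\widehat{\pi_1 X} \otimes \Q$.

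The key tool is the dictionary, due to Sullivan and Chen, between $1$-minimal commutative DGAs and pronilpotent rational Lie algebras. For any $1$-minimal DGA $(N,d) = (\Wedge V, d)$ with $V$ in degree $1$, the restriction $d|_V \colon V \to \Wedge^2 V$ dualizes to a Lie bracket on the pro-vector space $V^*$, giving a pronilpotent Lie algebra $\mathfrak{g}(N)$ with $N$ isomorphic to its Chevalley--Eilenberg cochain complex $C^*(\mathfrak{g}(N))$. The iterative construction of $N(1,q)$, which adjoins generators whose differentials lie in the previous stage, corresponds under this duality to the tower of nilpotent quotients $\mathfrak{g}(N) / \mathfrak{g}(N)_{q+1}$, and at each stage the new generators are $\Q$-dual to the next layer $\mathfrak{g}(N)_q / \mathfrak{g}(N)_{q+1}$ of the lower central series. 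Applied to the $1$-minimal piece of $M(X)$, this yields an isomorphism $\mathfrak{g}(M(X)(1)) \cong L(\pi_1 X)$ of filtered pro-Lie algebras, using the naturality of $A_{PL}$ together with Malcev's theorem identifying pronilpotent $\Q$-groups with their Lie algebras via the Baker--Campbell--Hausdorff formula. Applying $\Hom_\Q(-,\Q)$ to the filtration quotient then produces $L(\pi_1 X) / L_i(\pi_1 X)$, which is exactly $(N_i \pi_1 X) \otimes \Q$ after exponentiation.

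The main obstacle is matching Sullivan's obstruction-theoretic construction of $M(X)(1)$ with Bousfield--Kan's extension-theoretic construction of $(N_i \pi_1 X)\otimes\Q$ from Subsection \ref{ratlhomotopysubsection}. At stage $i$ on the DGA side, one adjoins generators to kill classes in $H^2(M(X)(1,i-1), d)$, while on the group side one forms the central extension classified by $\sigma_i \otimes 1 \in H^2((N_{i-1}\pi_1 X)\otimes\Q;\, (\Gamma_{i-1}/\Gamma_i \pi_1 X)\otimes\Q)$. The inductive step requires identifying the obstruction class killed at stage $i$ of the minimal model with $\sigma_i \otimes 1$ under the $A_{PL}$-to-group-cohomology comparison. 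This rests on the universal coefficient theorem together with the identification of group cohomology of a pronilpotent rational group with Chevalley--Eilenberg cohomology of its Lie algebra, and is the technical heart of Bousfield's theorem.
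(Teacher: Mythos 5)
The paper does not prove this statement: it is quoted verbatim, with citation, from Bousfield--Gugenheim (Theorem 12.8), and is used as a black box. So there is no internal proof to compare against, and any argument you give is necessarily your own reconstruction. Your outline follows the standard route (the one in Griffiths--Morgan, Chapter 13, and in Sullivan's and Chen's work): identify the tower of sub-DGAs $M(X)(1,i)$ of the $1$-minimal model with the Chevalley--Eilenberg complexes of the tower of nilpotent Lie algebras $L(\pi_1X)/\Gamma_i L(\pi_1 X)$, and transport the lower central series across the Sullivan dictionary and the exponential correspondence. That is the right architecture, and it is consistent with how the surrounding results in Section 4 of the paper (e.g.\ the cited Theorem 13.2 of Griffiths--Morgan) are organized.

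However, as written this is a proof plan rather than a proof. The inductive step --- showing that the obstruction class in $H^2(M(X)(1,i-1),d)$ killed when passing to $M(X)(1,i)$ corresponds, under the comparison of $A_{PL}$-cohomology with group cohomology of the nilpotent quotient, to the extension class $\sigma_i\otimes 1$ --- is exactly the content of Bousfield's theorem, and you name it as ``the technical heart'' without carrying it out; nothing in your first two paragraphs substitutes for it. Separately, be careful with the direction of the duality in your last step: $\Gamma^i\pi^1 M(X)=\Ima(\pi^1 M(X)(1,i-1)\to\pi^1 M(X))$ is an increasing filtration, so $\Hom_\Q(\pi^1 M(X)/\Gamma^i\pi^1 M(X),\Q)$ is the annihilator of $\Gamma^i\pi^1 M(X)$, which under the dictionary corresponds to (the closure of) $\Gamma_i L(\pi_1 X)$, not to the quotient $L(\pi_1X)/\Gamma_i L(\pi_1 X)$ that exponentiates to $(N_i\pi_1X)\otimes\Q$. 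The finite-stage object $(N_i\pi_1 X)\otimes\Q$ should come from dualizing $\Gamma^i\pi^1 M(X)\cong\pi^1 M(X)(1,i-1)$ itself; you should either track down the intended convention in the source or make explicit which space you are dualizing, since your sentence ``applying $\Hom_\Q(-,\Q)$ to the filtration quotient then produces $L(\pi_1X)/L_i(\pi_1X)$'' is not correct as stated.
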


There is a second construction of $\hat{G} \otimes \Q$ that proceeds through the theory of complete Hopf algebras \cite[Appendix~A]{quillen}. In particular, $\hat{G} \otimes \Q$ is isomorphic to the group of group-like elements $\{x \in \overline{\Q[G]}\ |\ \Delta x=x \hat{\otimes} x\}$ in the completion of the group algebra $\Q[G]$ with respect to the augmentation ideal. We can also define a complete Lie algebra by taking primitive elements $$L(G):=\{x\in \overline{\Q[G]}\ |\ \Delta x=1 \hat{\otimes} x+x\ \hat{\otimes} 1\}$$ in $\overline{\Q[G]}$. Recall that there is a lower central series for Lie algebras defined by $\Gamma_1 L = L$ and $\Gamma_{i+1} L = [L, \Gamma_i L]$.

The following proposition tells us that understanding the Malcev Lie algebra $L(G)$ is enough to understand the quotients in the lower central series of $G$ and also the completion $\overline{\Q[G]}$ with respect to the augmentation ideal. 
\begin{proposition} \emph{\cite[Section~4]{romanbez}}
\begin{enumerate}
\item $(\Gamma_i/\Gamma_{i+1} G) \otimes \Q \cong \Gamma_i/\Gamma_{i+1} (\hat{G} \otimes \Q) \cong \Gamma_i/\Gamma_{i+1} L(G)$. 
\item $\overline{U(L(G))} \cong \overline{\Q[G]}$ as complete Hopf algebras.
\end{enumerate}
\end{proposition}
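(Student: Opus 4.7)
The plan is to derive both statements from the general theory of complete cocommutative Hopf algebras over $\Q$, following the approach of Quillen in \cite[Appendix~A]{quillen}. The crucial input is the structure theorem that any such Hopf algebra is the completed universal enveloping algebra of its Lie algebra of primitives, together with the fact that in characteristic zero the exponential and logarithm series converge on appropriate subsets of a complete augmented algebra and give inverse bijections between primitive and group-like elements.

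First I would set the stage: the group algebra $\Q[G]$ carries its standard Hopf algebra structure with $\Delta(g) = g \otimes g$, $S(g) = g^{-1}$, and $\epsilon(g) = 1$, and we complete with respect to powers of the augmentation ideal $I = \ker \epsilon$ to obtain the complete Hopf algebra $\overline{\Q[G]}$. By construction, $L(G)$ is the Lie algebra of primitives and $\hat{G} \otimes \Q$ is the group of group-likes. For part (2), the completed Milnor--Moore theorem identifies $\overline{\Q[G]}$ canonically with $\overline{U(L(G))}$: the inclusion $L(G) \hookrightarrow \overline{\Q[G]}$ extends via the universal property of $U$ to a map $U(L(G)) \to \overline{\Q[G]}$ which becomes an isomorphism after completing the source with respect to its own augmentation ideal. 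The only nontrivial point is bijectivity on associated gradeds, which reduces to a Poincaré--Birkhoff--Witt statement for $\gr \overline{\Q[G]}$.

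For part (1), I would identify three natural filtrations and show they all coincide after rationalization. On the Lie algebra side, one identifies the intrinsic lower central series $\Gamma_i L(G)$ with the filtration $L(G) \cap \overline{I^i}$ coming from the $I$-adic topology, so that $\gr L(G)$ embeds as a graded Lie subalgebra of $\gr \overline{\Q[G]}$. On the group side, for a pro-nilpotent rational group like $\hat{G} \otimes \Q$ the commutator filtration agrees with $\{x \mid x - 1 \in \overline{I^i}\}$, which one checks by induction using that $\exp$ sends brackets to group commutators modulo higher filtration. The exponential and logarithm give inverse filtered bijections compatible with both filtrations, which yields the second isomorphism of (1). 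The first isomorphism, relating the graded pieces of $G$ itself to those of $\hat{G} \otimes \Q$, uses the universal property of Malcev completion: since each $\Gamma_i G / \Gamma_{i+1} G$ is a finitely generated abelian group and $\hat{G} \otimes \Q = \varprojlim (N_i G) \otimes \Q$ is built by rationalizing the central extensions defining $N_i G$ (as recalled in Subsection 4.2), the induced map $(\Gamma_i G / \Gamma_{i+1} G) \otimes \Q \to \Gamma_i(\hat{G} \otimes \Q) / \Gamma_{i+1}(\hat{G} \otimes \Q)$ is an isomorphism on each graded piece.

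The main obstacle will be the bookkeeping in part (1): one must simultaneously track the abstract lower central series of $G$, the $I$-adic filtration on $\overline{\Q[G]}$, and the lower central series on both $L(G)$ and $\hat{G} \otimes \Q$, and verify that they correspond under $\exp$ and $\log$ at the level of associated gradeds. Part (2) is essentially formal once convergence of $\exp$ and $\log$ on the appropriate subsets of $\overline{\Q[G]}$ has been established, which follows from $I$-adic completeness.
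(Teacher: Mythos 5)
The paper does not prove this proposition: it is quoted from \cite[Section~4]{romanbez}, whose argument (as the paper itself signals immediately afterward by pointing to \cite[Appendix~A.2]{quillen}) rests on Quillen's theory of complete Hopf algebras, the $\exp$/$\log$ correspondence between primitives and group-likes, and the identification $\gr \Q[G] \cong U(\gr G \otimes \Q)$. Your proposal is exactly this standard argument, with the genuinely hard step (the associated-graded comparison) correctly identified, so it is consistent with the source the paper relies on.
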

In fact, even more is true. The power series defining $\log(x)$ and $e^x$ converge in any complete Hopf algebra and give an isomorphism between the Lie algebra of primitive elements and the group of group like elements \cite[Appendix~A.2]{quillen}. Thus, $L(G)$ completely determines $\hat{G} \otimes \Q$ and vice versa.

\subsection{Nonhomogeneous Quadratic Duality}
\label{dualitysubsection}

In this subsection, we describe and outline definitions and results on the nonhomogeneous quadratic duality \cite{priddy, positselskii, romanbez} between quadratic differential graded algebras and weak quadratic-linear algebras. This will give a tractable method for computing the minimal model $(M(X),d)$ and the Malcev Lie algebra $L(\pi_1(X))$ when we have a quadratic model $(A(X), d)$ for a space $X$. 

First we need to establish some conventions on graded and filtered algebras.
All graded and filtered algebras will be locally finite dimensional. 
All gradings will be concentrated in nonnegative degree and will be notated with superscripts. All $\N$-filtrations will be increasing, exhaustive, and indexed by subscripts. The tensor algebra on a $k$-vector space $V$ will be denoted by $T(V)$. It is graded by putting $V$ in degree $1$, equipped with the increasing filtration induced by the grading, and augmented by the map $\epsilon: T(V) \to k$ that sends $V$ to $0$. 

A \textbf{WQLA} (weak quadratic-linear algebra) is an augmented algebra $\epsilon: B \to k$ together with a choice of $k$-subspace $W$, satisfying:
\begin{enumerate}[(i)]
\item $1 \in W$.
\item $B$ is generated multiplicatively by $W$.
\item Let $V = \ker(\epsilon|_W)$ and $J=\ker(T(V)\to B)$.  The ideal $J$ is generated by $J_2$.
\end{enumerate}
A \textbf{QLA} (quadratic-linear algebra) is an augmented algebra $\epsilon: B \to k$ equipped with an exhaustive $\N$-filtration such that $\gr B$ is quadratic. In particular this implies that the choice $W = B_1$ makes $B$ into a $WQLA$. A morphism of WQLAs $f: (B, \epsilon, W) \to (B', \epsilon', W')$ is a homomorphism of augmented algebras such that $f(W) \subset W'$. Morphisms of QLAs coincide with homomorphisms of augmented filtered algebras.

A WQLA $B$ has an associated quadratic algebra $B^{(0)}$ which is defined by generators $V \cong W / k \cdot 1$ subject to the relations $I = J_2/J_1$. For QLAs, $\gr B \cong B^{(0)}$. We say that a WQLA $B$ is \textbf{Koszul} if the underlying quadratic algebra $B^{(0)}$ is Koszul. Every Koszul WQLA is in fact a QLA \cite[Section~3.3]{positselskii}.
Denote the category of weak quadratic-linear algebras  by $\mathtt{WQLA}$, and denote its subcategory consisting of Koszul quadratic-linear algebras by $\mathtt{KLA}$. 

A DGA $A$ has an underlying graded algebra given by forgetting the differential. We say that a DGA is \textbf{quadratic} (respectively \textbf{Koszul}) if its underlying algebra is quadratic (respectively Koszul). 
Let $\mathtt{QDGA}$ be the category of quadratic differential graded algebras, and denote its subcategory consisting of Koszul DGAs by $\mathtt{KDGA}$. 

There is a fully faithful contravariant functor $D_{WQLA}: \mathtt{WQLA} \to \mathtt{QDGA}$ that is defined on objects as follows: Let $(B, \epsilon, W)$ be a WQLA. As a graded algebra $D(B, \epsilon, W) = B^{(0)!}$ is the quadratic dual to the quadratic algebra associated to $B$. Note that $J_2 \cap T_1(V) = 0$ and $I = J_2/J_1$. Thus we can represent $J_2$ as the graph of a linear map $(-h, -\phi): I \to T_1(V) = k \oplus V$. It is easy to see that $J_2 \subset \ker(\epsilon)$ implies that $h = 0$. The map $d_1 = \phi^*: D(B, \epsilon, W)^1 \cong V^* \to I^* \cong D(B, \epsilon, W)^2$ can be extended to a differential on $D(B, \epsilon, W)$.

On the other hand, there is a contravariant functor $D_{QDGA}: \mathtt{QDGA} \to \mathtt{WQLA}$ defined on objects as follows: Let $(A, d)$ be a quadratic DGA and let $V=A^1$. We can write $A \cong T(V)/J$. The map $d|_{A^1}:V=A^1\to A^2 = (V\otimes V)/J$ has a dual map $\phi:J^\perp\to V^*$, where $V^*$ is the dual vector space to $V$ and $J^\perp\subseteq V^*\otimes V^*$ is the annihilator of $J$. Then $D(A, d) = (T(V^*)/I, \bar{\epsilon}, W_A)$, where $I$ is the ideal generated by $\{x-\phi(x)\ |\ x\in J^\perp\}$,  $\bar{\epsilon}$ is the augmentation induced by $\epsilon:T(V^*)\to k$, and $W_A=k\oplus V^*$.

\begin{proposition} \emph{\cite[Section~2.5]{positselskii}}
The functors $D_{WQLA}$ and $D_{QDGA}$ restrict to a contravariant equivalence of categories between $\mathtt{KLA}$ and $\mathtt{KDGA}$.
\begin{center}
\begin{tikzpicture}
\node at (0,0) (KLA) {$\mathtt{KLA}$};
\node at (2,0) (KDGA) {$\mathtt{KDGA}$};
\node at (0,1) (WQLA) {$\mathtt{WQLA}$};
\node at (2,1) (QDGA) {$\mathtt{QDGA}$};
\draw[->] (WQLA.350) -- (QDGA.190);
\draw[->] (QDGA.170) -- (WQLA.10);
\draw[<->] (KLA) -- (KDGA);
\node at (1,.1) {$\sim$};
\draw[right hook->] (KLA) -- (WQLA);
\draw[right hook->] (KDGA) -- (QDGA);
\end{tikzpicture}
\end{center}
\end{proposition}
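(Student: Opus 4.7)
The plan is to follow Positselski's framework \cite{positselskii}, organizing the verification into three stages: first, check that $D_{QDGA}$ and $D_{WQLA}$ are well-defined functors; second, verify that they restrict to the Koszul subcategories; third, construct the natural isomorphisms witnessing the equivalence. Functoriality on morphisms is immediate from vector-space duality (a morphism of WQLAs induces dual maps on $V$ and $I$ that commute with $\phi$, and symmetrically on the DGA side), so the substantive content is on objects.

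For $D_{WQLA}$, the main issue is showing that the output is a DGA, i.e., that the map $\phi^*: V^* \to I^*$ extends to a derivation on $B^{(0)!}$ satisfying $d^2 = 0$. The extension as a derivation is forced by the freeness of $T(V^*)$; the condition $d^2 = 0$ is equivalent, after dualizing, to a cocycle identity on $\phi$ imposed by the inclusion $J_3 \subset J_2\cdot T(V) + T(V)\cdot J_2$, which holds because the ideal of relations of $B$ is generated in degree $2$. Conversely, for $D_{QDGA}$, one needs to verify that the ideal $I \subset T(V^*)$ makes the quotient into a WQLA; degree-$2$ generation of $I$ follows in the Koszul case from Positselski's nonhomogeneous PBW theorem. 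Restriction to the Koszul subcategories then uses the classical fact that a quadratic algebra is Koszul if and only if its quadratic dual is Koszul, combined with the observation that $\gr D_{QDGA}(A,d)$ is isomorphic (as a quadratic algebra) to the quadratic dual of the underlying algebra of $(A,d)$.

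The compositions $D_{QDGA}\circ D_{WQLA}$ and $D_{WQLA}\circ D_{QDGA}$ are then shown to be naturally isomorphic to the identity by direct computation: for a WQLA $B$ with relations $J_2$ encoded as the graph of $\phi: I \to V$, the round trip returns $T(V)/\langle y - \phi^{**}(y)\ |\ y \in (J^\perp)^\perp\rangle$, and the locally finite-dimensional identifications $(J^\perp)^\perp = J_2$ and $\phi^{**} = \phi$ recover $B$; the symmetric calculation handles the other composition. I expect the main obstacle to be the careful bookkeeping in this last step --- in particular, verifying that the reconstructed differential on $D_{WQLA}D_{QDGA}(A,d)$ matches the original $d$ rather than merely some derivation with the right restriction to degree $1$. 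This requires the full force of the Koszul property to rule out higher-order corrections and is precisely where the hypothesis on the subcategories $\mathtt{KLA}$ and $\mathtt{KDGA}$ enters essentially.
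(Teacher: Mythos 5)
The paper does not actually prove this proposition: it is imported wholesale from Positselski (\cite[Section~2.5]{positselskii}), so there is no internal argument to compare yours against. Your outline follows the standard route through that theory and the overall architecture (well-definedness, restriction to Koszul objects, double-dual computation) is the right one. The round trip on the $\mathtt{WQLA}$ side is indeed the elementary biduality computation you describe: $J_2$ is the graph of $\phi: I \to k \oplus V$, local finite-dimensionality gives $(J^\perp)^\perp = J$ and $\phi^{**} = \phi$, and this needs no Koszulity --- which is consistent with the paper's separate assertion that $D_{WQLA}$ is fully faithful on all of $\mathtt{WQLA}$.

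However, you have misplaced the point where Koszulity enters, in two ways. First, degree-$2$ generation of the ideal $I = \langle x - \phi(x) \mid x \in J^\perp \rangle$ defining $D_{QDGA}(A,d)$ is true by construction and needs no theorem; what the nonhomogeneous PBW theorem actually supplies is that, when $A$ is Koszul, the ideal $I$ contains no \emph{unexpected} elements of low filtration degree, so that $\gr D_{QDGA}(A,d) \cong A^!$ rather than a proper quotient of it (equivalently, the output is a genuine QLA, landing in $\mathtt{KLA}$). Second, your final concern --- that the reconstructed differential on $D_{WQLA}D_{QDGA}(A,d)$ might be ``some derivation with the right restriction to degree $1$'' other than $d$ --- is vacuous: the underlying algebra is generated in degree $1$, so a derivation is uniquely determined by its degree-$1$ values, and those are recovered as $\phi^{**} = \phi = (d|_{A^1})^{**}$. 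The genuine failure mode without Koszulity is that the \emph{underlying algebra} of the round trip is wrong (because the filtration on $D_{QDGA}(A,d)$ collapses and $B^{(0)}$ acquires extra relations), not that the differential is wrong. Your sketch would be fine if you relocated the appeal to the PBW theorem to that step.
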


If $(A,d)$ is a commutative QDGA, then $S^2(V)\subseteq J$ and hence $J^\perp\subseteq\Lambda^2(V^*)$. This implies that there is a Lie algebra $L=L(A)$ such that $$D(A, d) \cong (U(L), \epsilon, W_A);$$ we call this Lie algebra the \textbf{Lie algebra dual to $A$}. Note that in general, $W_A \neq k \oplus L$, so the induced filtration is not the order filtration.

\begin{example}
We start with a very simple example of the duality we consider in Section \ref{topologysection}. A model for the punctured elliptic curve is given by: 
$$A=\Lambda(x,y,g)/(xg,yg)$$ with differential defined by $dx=dy=0$ and $dg=xy$. The QL-algebra dual to $A$ is 
$$B = T(a,b,c)/(ab-ba-c).$$
As augmented algebras $B \cong U(L)$, where $L$ is the free Lie algebra on the generators $a$ and $b$, but the filtration on $B$ does not coincide with the order filtration on $U(L)$.
\end{example}

Let $L$ be a finite dimensional Lie algebra, and consider its universal enveloping algebra $U(L)$ equipped with the order filtration. It is an easy exercise to see that the QDGA dual to $U(L)$ is the graded algebra $\Lambda(L^*)$ equipped with differential dual to the Lie bracket. This is often called the the standard (or Chevalley-Eilenberg) complex of $L$ and is denoted by $(\Omega(L),d)$. We can extend this to the situation when the Lie algebra $L^\bullet$ is $\N$-graded with each graded piece finite-dimensional by defining the standard complex of $L^\bullet$ to be the restricted dual subalgebra $\Omega(L^\bullet):=\oplus_{i,j}([\Lambda^iL]^j)^*$ of $\Lambda(L^*)$ with differential dual to the Lie bracket.

Given a minimal model $(M(X), d)$ for a space $X$, we can reconstruct the Lie algebra $L(\pi_1X)$ as follows: The commutative DGAs $(M(1, i), d)$ are quadratic and hence dual to Lie algebras $L_i$. Moreover the inclusions $M(1, i) \to M(1, i+1)$ induce maps $L_{i+1} \to L_i$. 
\begin{theorem} \emph{\cite[Theorem~13.2]{griffithsmorgan}}
There are natural isomorphisms $$L_i \cong L(\pi_1 X)/\Gamma_i L(\pi_1X)\ \ \ \ \text{ and }\ \ \ \ L(\pi_1 X) \cong \varprojlim L_i.$$ 
\end{theorem}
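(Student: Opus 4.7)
The plan is to identify each $L_i$ with a nilpotent truncation of the Malcev Lie algebra $L(\pi_1 X)$ at a suitable stage, and then pass to the inverse limit. First I would analyze the structure of the minimal model stage $M(1,i)$. By definition, $M(1,i) = \bigcup_q M(1,i,q)$ is generated entirely in degree one with iteratively defined relations, so it is a connected commutative quadratic DGA; applying the duality of Subsection \ref{dualitysubsection}, one recovers a Lie algebra $L_i$ with $U(L_i) \cong D(M(1,i), d)$. The stage filtration $M(1,i,q) \subset M(1,i,q+1)$ dualizes to a descending filtration on $L_i$, which one verifies inductively to be the lower central series; in particular $L_i$ is finite dimensional, nilpotent of class at most $i$, and $L_i^{\text{ab}} \cong H^1(X,\Q)^*$.

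Second, I would use the rational homotopy theory developed in Subsections \ref{ratlhomotopysubsection} and 4.2 to express the Malcev tower in Lie-algebraic terms. Bousfield's Theorem \ref{ratlhomotopyprop} together with the companion theorem relating $\pi^1 M(X)/\Gamma^i \pi^1 M(X)$ to $(N_i \pi_1 X) \otimes \Q$, and the $\exp$/$\log$ bijection between primitives and group-like elements in $\overline{\Q[\pi_1 X]}$, show that the tower of rationalized nilpotent quotients $(N_i \pi_1 X) \otimes \Q$ corresponds to the tower of nilpotent Lie algebras $L(\pi_1 X)/\Gamma_i L(\pi_1 X)$. In particular $L(\pi_1 X) \cong \varprojlim L(\pi_1 X)/\Gamma_i L(\pi_1 X)$ is complete with respect to its lower central series.

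Third, and this is the core of the proof, I would match the two towers at each finite stage by induction on $i$. The base case is the identification of both abelianizations with $H^1(X, \Q)^*$. For the inductive step, the key observation is that the Lie-theoretic extension class in $H^2$ that controls the extension
\[
0 \to \Gamma_{i-1}/\Gamma_i L(\pi_1 X) \to L(\pi_1 X)/\Gamma_i L(\pi_1 X) \to L(\pi_1 X)/\Gamma_{i-1} L(\pi_1 X) \to 0
\]
is computed, after rationalization, by the quadratic piece of the differential on the newly adjoined generators of $M(1,i)$. This is the Chevalley--Eilenberg correspondence applied layer by layer, combined with the isomorphism $H^2(N_{i-1}, -) \otimes \Q \cong H^2((N_{i-1}) \otimes \Q, -)$ quoted in Subsection 4.2. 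Checking compatibility with the transition maps $L_{i+1} \to L_i$ yields the first isomorphism, and taking $\varprojlim$ of both sides gives the second.

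The principal obstacle is step three: one must verify carefully that the filtration on $M(1,i)$ coming from the minimal-model construction dualizes to the lower central series on the dual nilpotent Lie algebra, and that the extension class $\sigma_i$ governing the Malcev tower on the group side truly coincides (after rationalization and Lie-ification) with the piece of the differential of $M(X)$ read off at stage $i$. Once this matching is established at each finite level and shown to commute with the system maps, the second isomorphism is automatic from completeness of $L(\pi_1 X)$.
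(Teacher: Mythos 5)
This statement is quoted from Griffiths--Morgan \cite[Theorem~13.2]{griffithsmorgan}; the paper offers no proof of its own, so there is nothing internal to compare your argument against. Your outline is essentially the standard argument from that reference: identify each stage $M(1,i)$ of the $1$-minimal model with the Chevalley--Eilenberg complex of a nilpotent Lie algebra $L_i$ whose lower central series is dual to the stage filtration $M(1,i,q)$, use the Malcev ($\exp$/$\log$) correspondence to translate the tower $(N_i\pi_1 X)\otimes\Q$ into the tower $L(\pi_1 X)/\Gamma_i L(\pi_1 X)$, and match the two towers inductively by comparing extension classes in $H^2$. That is the right route, and your identification of the heart of the matter --- that the quadratic part of $d$ on the generators adjoined at stage $i$ represents the class $\sigma_i\otimes 1$ --- is accurate. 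Be aware, though, that what you have written is a proof sketch rather than a proof: the inductive step requires knowing that $H^2$ of the Chevalley--Eilenberg complex of $L_{i-1}$ computes $H^2((N_{i-1}\pi_1 X)\otimes\Q,-)$, which is exactly the quasi-isomorphism being established by the induction and must be threaded through carefully; and the indexing between the stage filtration on $M(1,i)$ and the lower central series of $L_i$ needs to be pinned down rather than asserted. Neither issue is a wrong turn, just the places where the real work lives.
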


We can also recover $\overline{\Q[\pi_1 X]}$ from $(M(X), d)$. Let $(C^{\bullet, \bullet}(A, d), d_1, d_2)$ be the cobar bicomplex of $(A, d)$ as defined in \cite[Section~3]{romanbez} (also called the dual bar bicomplex in \cite[Section~3]{positselskii}) and let $H^*_b(A, d)$ be the cohomology of its totalization.
\begin{lemma} \emph{\cite[Lemma~3.1]{romanbez}} \label{cobarprop}
Let $(A, d)$ be a QDGA. Then $H^0_b(A,d)$ is naturally isomorphic to $D(A, d)$. The increasing columns filtration on the cobar complex induces the QLA structure on $D(A, d)$. The decreasing rows filtration on the cobar complex induces the filtration by powers of the augmentation ideal of $D(A, d)$.
\end{lemma}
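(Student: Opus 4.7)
The plan is to compute $H^0_b(A,d)$ directly from the cobar bicomplex and match the resulting algebra and filtrations against the definition of $D(A,d)$. I would set up the bicomplex so that the column at tensor degree $n$ is (a shift of) the dual of $(A^+)^{\otimes n}$, with $d_2$ the bar-type differential dual to multiplication in $A$ and $d_1$ induced by the internal differential $d$ of $A$. Writing $A = T(V)/J$ with $V = A^1$, the total degree zero part of the bicomplex decomposes by tensor degree, with pieces of the form $(V^*)^{\otimes n}$ in internal degree $0$ together with corrections in higher internal degree.

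First I would handle the case $d = 0$, recovering the standard Priddy-style fact that $H^0$ of the cobar complex of a pure quadratic algebra $A = T(V)/J$ equals the quadratic dual $A^! = T(V^*)/J^\perp$: a cocycle is a family in $T(V^*)$ compatible with the coproduct dual to multiplication, and compatibility at tensor degree $2$ forces the family to annihilate $J$, while the remaining coherence conditions are automatically fulfilled in $T(V^*)/J^\perp$. Next, turning on $d_1$, the internal differential $d|_V : V \to A^2$ projects to a map $V \to (V \otimes V)/J$ whose dual is a linear map $\phi : J^\perp \to V^*$. A cocycle under $d_1 + d_2$ in total degree $0$ must then trade each $x \in J^\perp$ for $\phi(x) \in V^*$, presenting $H^0_b(A,d)$ as $T(V^*)/\langle x - \phi(x) \mid x \in J^\perp \rangle$, which by definition is $D(A,d)$ with augmentation $\bar\epsilon$ and distinguished subspace $W_A = k \oplus V^*$.

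To match the filtrations, I would observe that under this identification the columns filtration (by tensor degree in the cobar) corresponds to products of at most $n$ elements of $W_A$, which is precisely the exhaustive $\N$-filtration defining the QLA structure on $D(A,d)$; and the rows filtration (by internal $A$-degree) dualizes to the tensor-degree filtration on the augmentation ideal side, which on $D(A,d)$ is the filtration by powers of the augmentation ideal. The main technical obstacle is bookkeeping: the total degree is a signed sum of tensor and internal degrees, so sign conventions and degree shifts must be tracked carefully enough to ensure that the identification is an isomorphism of \emph{filtered} augmented algebras (not merely of associated graded pieces), and that the shuffle/concatenation product on $H^0_b$ transports correctly to the product on $D(A,d)$.
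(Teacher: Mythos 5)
The paper does not prove this lemma: it is quoted directly from Bezrukavnikov \cite[Lemma~3.1]{romanbez} (compare also Positselski), so there is no internal proof to compare against. Your sketch reconstructs the standard argument from that source: in each weight the total-degree-zero part of the cobar bicomplex is $(V^*)^{\otimes n}$, the bar differential $d_2$ contributes the subspace $J^\perp \subseteq V^*\otimes V^*$ via the dual of multiplication, the internal differential contributes $\phi: J^\perp \to V^*$, and the two filtrations are read off from tensor length on the columns and internal degree on the rows. The one point you should tighten is the direction of the identification: since $D(A,d) = T(V^*)/\langle x - \phi(x)\rangle$ is a \emph{quotient} of $T(V^*)$, and since the columns filtration is asserted to induce the QLA filtration (the image of the tensor-length filtration under a surjection $T(V^*)\to H^0_b$), the cohomology $H^0_b$ must be realized as the cokernel of the total differential mapping into total degree zero, not as a space of cocycles cut out by compatibility with the coproduct --- the latter reading would produce a subalgebra of $T(V^*)$ rather than the stated quotient. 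Your final presentation $T(V^*)/\langle x-\phi(x)\mid x\in J^\perp\rangle$ is correct, but the intermediate ``cocycle'' language describes the kernel rather than the cokernel; once that is fixed, the argument is the one in the cited reference.
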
 

\begin{proposition} \emph{\cite[Proposition~4.0]{romanbez}} \label{cobarprop2}
Let $(M(X),d)$ be the minimal model of $X$. 
Then $$\overline{\Q[\pi_1 X]} \cong \overline{H^0_b(M(X), d)}$$
where the completion on the left is with respect to the augmentation ideal and the completion on the right is with respect to the decreasing rows filtration.
\end{proposition}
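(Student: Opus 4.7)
The plan is to reduce the claim to Lemma \ref{cobarprop} by filtering the minimal model via its Sullivan tower in degree $1$, applying the quadratic duality at each finite stage, and then passing to the inverse limit. The point is that although $(M(X),d)$ is typically not a QDGA, it is an increasing union of finite-dimensional QDGAs in a way that is compatible with both the cobar bicomplex and the Malcev completion of $\pi_1 X$.

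First, I would argue that the inclusion of the sub-DGA $(M(1),d) \hookrightarrow (M(X),d)$ generated by degree-$\le 1$ elements induces an isomorphism on $H^0_b$ that respects the decreasing rows filtration. The reason is that the summands of the cobar bicomplex contributing to total degree zero only involve tensor powers drawn from the degree-$1$ part of the augmentation ideal, so generators in degrees $\geq 2$ cannot appear. This reduces the problem to computing $H^0_b(M(1),d)$ and identifying its rows filtration.

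Next, each finite stage $(M(1,i),d)$ of the Sullivan tower $M(1)=\bigcup_i M(1,i)$ is a finite-dimensional commutative QDGA: as a graded algebra it is free commutative on finitely many degree-$1$ generators, and the differential sends each newly adjoined generator into the quadratic piece $\Lambda^2(M(1,i-1)^1)$ of the previous stage. Lemma \ref{cobarprop} therefore yields QLA isomorphisms
\[
H^0_b(M(1,i),d) \;\cong\; D(M(1,i),d) \;\cong\; U(L_i),
\]
identifying the rows filtration with the augmentation filtration of the universal enveloping algebra. By the Griffiths-Morgan theorem recalled just above, $L_i \cong L(\pi_1 X)/\Gamma_i L(\pi_1 X)$, so the tower $\{U(L_i)\}$ is precisely the one whose inverse limit computes $\overline{U(L(\pi_1 X))} \cong \overline{\Q[\pi_1 X]}$ by the complete Hopf algebra result of Section 4.2.

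The main obstacle is the compatibility of $H^0_b$ and its rows-filtration completion with the inverse limit over $i$. The transition maps $M(1,i+1) \twoheadrightarrow M(1,i)$ are surjective with finite-dimensional kernels in each bidegree of the cobar bicomplex, so Mittag-Leffler applies and $H^0_b(M(1),d) \cong \varprojlim_i H^0_b(M(1,i),d)$ as augmented filtered algebras. The delicate step is matching the decreasing rows filtration on $H^0_b(M(1),d)$, under the inverse limit, to the tower of powers of the augmentation ideal in $\overline{U(L(\pi_1 X))}$; this is exactly the stage-by-stage identification supplied by Lemma \ref{cobarprop}, so once compatibility in the limit is established the desired
\[
\overline{H^0_b(M(X),d)} \;\cong\; \overline{U(L(\pi_1 X))} \;\cong\; \overline{\Q[\pi_1 X]}
\]
follows. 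The careful bookkeeping of how the Sullivan filtration interacts with the rows/columns bigrading of the cobar bicomplex is where the real work lies.
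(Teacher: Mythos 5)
The paper does not actually prove this proposition: it is imported verbatim from \cite[Proposition~4.0]{romanbez}, so there is no in-paper argument to compare against. That said, your overall strategy --- reduce to the degree-one Sullivan subalgebra $M(1)$, dualize each finite stage $M(1,i)$ via Lemma \ref{cobarprop} to get $U(L_i)$ with its augmentation filtration, identify $L_i\cong L(\pi_1X)/\Gamma_iL(\pi_1X)$ by the Griffiths--Morgan theorem, and pass to the limit --- is exactly the natural assembly of the surrounding results of Section 4, and it is the right skeleton. Your reduction to $M(1)$ is correct but under-justified: what you need is that the relations imposed on $H^0_b$ come from the maps $(M^1)^{\otimes 2}\to M^2$ (multiplication) and $M^1\to M^2$ (differential), and that both factor through $M(1)^2=\Lambda^2M^1$; the second of these uses minimality ($dM^1\subseteq M^+\cdot M^+$), not just degree bookkeeping in the bicomplex.

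There are two genuine gaps. First, the transition maps in the Sullivan tower are \emph{inclusions} $M(1,i)\hookrightarrow M(1,i+1)$, not surjections $M(1,i+1)\twoheadrightarrow M(1,i)$ as you assert; since the dual bar complex is built from graded duals and is contravariant, it is the induced maps $H^0_b(M(1,i+1))\to H^0_b(M(1,i))$, i.e.\ $U(L_{i+1})\to U(L_i)$, that are surjective. Your Mittag--Leffler step therefore does not parse as written: it must be applied to the inverse system of dual bar complexes, and the statement actually requiring proof is that the rows-filtration completion satisfies $\overline{H^0_b(M(1),d)}\cong\varprojlim_i\overline{H^0_b(M(1,i),d)}$. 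Second, the final identification $\varprojlim_i\overline{U(L/\Gamma_iL)}\cong\overline{U(L)}\cong\overline{\Q[\pi_1X]}$ is asserted rather than established; note that $\varprojlim_iU(L/\Gamma_iL)$ without the inner completions is \emph{not} the augmentation-ideal completion of $U(L)$ (the two-sided ideal generated by $\Gamma_iL$ is contained in, but generally smaller than, the $i$-th power of the augmentation ideal), so the interchange of the two completions with the inverse limit is precisely the content of the proposition. You acknowledge this yourself by deferring it as ``where the real work lies,'' which means the central analytic step of the proof is left open.
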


Suppose that $(A,d)$ is a quadratic model for a space $X$. Let $L$ be the Lie algebra dual to $A$, and let $L(\pi_1X)$ be the Malcev Lie algebra of $X$.
The following theorem tells us how to obtain $L(\pi_1X)$ from $L$, and also how to compute the minimal model of $X$ when $A$ is Koszul. 
\begin{theorem}
\label{liealgebrathm}
Let $X$ be a space with a quadratic model $(A(X),d)$, and let $L=L(A(X))$ be the Lie algebra dual to $A(X)$. 
\begin{enumerate}
\item $\overline{U(L)}\cong\overline{\Q[\pi_1X]}$, where the completions are each with respect to the augmentation ideal. This isomorphism respects the Hopf algebra structures.
\item $\overline{L}\cong L(\pi_1X)$, where the completion of $L$ is with respect to the filtration by bracket length.
\item If $A(X)$ is Koszul, and $L$ is graded by bracket length with $L_i:=L/\Gamma_iL$ finite dimensional for all $i$, then $(\Omega(L^\bullet),d)$ is the minimal model of $X$.
\item Under the hypotheses of (3), $\Q_\infty(X)$ is a $K(\pi,1)$ space.
\end{enumerate}
\end{theorem}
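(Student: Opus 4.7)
The plan is to deduce parts (1) and (2) from the cobar-construction results of Subsection~\ref{dualitysubsection}, prove (3) by exhibiting $\Omega(L^\bullet)$ as both minimal and quasi-isomorphic to $A(X)$, and derive (4) from the shape of that minimal model.

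For (1), start from Proposition~\ref{cobarprop2}, which realizes $\overline{\Q[\pi_1 X]}$ as $\overline{H^0_b(M(X),d)}$ with respect to the decreasing rows filtration. A quasi-isomorphism $A(X) \simeq M(X)$ of models for $X$ induces an isomorphism of cobar bicomplex cohomologies that respects both the rows and columns filtrations, so $H^0_b(A(X),d) \cong H^0_b(M(X),d)$ as filtered augmented algebras. By Lemma~\ref{cobarprop}, the left-hand side is $D(A(X),d) \cong U(L)$, and the rows filtration is identified with the filtration by powers of the augmentation ideal. Completing yields $\overline{U(L)} \cong \overline{\Q[\pi_1 X]}$, and compatibility of the diagonals induced by the commutative multiplications on $A(X)$ and $M(X)$ upgrades this to an isomorphism of complete Hopf algebras. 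Part (2) is then immediate by passage to primitive elements: $L(\pi_1 X)$ is by definition the Lie algebra of primitives in $\overline{\Q[\pi_1 X]}$, which corresponds under (1) to the primitives of $\overline{U(L)}$, namely $\overline{L}$ with respect to the bracket-length filtration (matching the augmentation-ideal filtration on $U(L)$).

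For (3), first verify minimality of $\Omega(L^\bullet)$. As a graded algebra it is $\Lambda(L^{\bullet *})$, freely commutatively generated in cohomological degree $1$, and connectedness is immediate. For the filtration condition, write $L^\bullet = \bigoplus_{k \geq 1} L^{(k)}$ for the bracket-length grading; then $[L^{(i)}, L^{(j)}] \subseteq L^{(i+j)}$, and dually $d(L^{(k)*}) \subseteq \sum_{i+j=k,\, i,j \geq 1} L^{(i)*} \wedge L^{(j)*}$. Induction on $k$ shows $L^{(k)*} \subseteq B(1,k)$: the base case uses $dL^{(1)*} = 0$, and in the inductive step $d\xi \in B(1, k-1)\cdot B(1,k-1)$ for $\xi \in L^{(k)*}$. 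Thus $B(1) = \bigcup_q B(1,q)$, and all higher $B(n)$ are exhausted as well since the algebra is generated in degree $1$. For the quasi-isomorphism $\Omega(L^\bullet) \simeq A(X)$, invoke Koszulity: when $A(X)$ is a Koszul QDGA, the bar--cobar adjunction underlying the equivalence $\mathtt{KLA} \leftrightarrow \mathtt{KDGA}$ supplies a chain-level map $\Omega(L^\bullet) \to A(X)$ inducing the identity on $H^*(X,\Q)$. Since both algebras are then connected by quasi-isomorphisms to $M(X)$ and $\Omega(L^\bullet)$ is minimal, the uniqueness of minimal models identifies $\Omega(L^\bullet)$ with $M(X)$.

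Part (4) is then a short consequence: with $M(X) \cong \Omega(L^\bullet)$, the augmentation ideal $M(X)^+$ is generated in cohomological degree $1$, so $M(X)^+/(M(X)^+\cdot M(X)^+)$ is concentrated in degree $1$ and $\pi^q M(X) = 0$ for $q \geq 2$. Theorem~\ref{ratlhomotopyprop} then gives $\pi_q\Q_\infty(X) = 0$ for $q \geq 2$, i.e., $\Q_\infty(X)$ is a $K(\pi,1)$. The main obstacle throughout is the quasi-isomorphism in (3): one must carefully align the bracket-length grading on $L$ with the augmentation-ideal filtration on $U(L)$ produced by quadratic-linear duality, and exploit Koszulity to guarantee that the Chevalley--Eilenberg complex $\Omega(L^\bullet)$ resolves $A(X)$ rather than merely agreeing with it in low cohomological degree.
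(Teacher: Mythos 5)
Parts (1), (2), and (4) track the paper's argument. In (1) you are slightly too strong: the quasi-isomorphism $M(X)\to A(X)$ does not obviously induce an isomorphism $H^0_b(A(X),d)\cong H^0_b(M(X),d)$ before completion; what one actually gets (this is \cite[Lemma~3.3a]{romanbez}, which the paper cites) is an isomorphism on the associated graded of the rows filtration, and hence an isomorphism only after completing. Since the statement of (1) is about completions, this is a repairable imprecision rather than a gap. Your minimality check for $\Omega(L^\bullet)$ in (3) is a nice addition; the paper simply asserts it.

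The genuine gap is the quasi-isomorphism $\Omega(L^\bullet)\simeq A(X)$ in part (3). You ``invoke Koszulity'' to say that the equivalence $\mathtt{KLA}\leftrightarrow\mathtt{KDGA}$ supplies a map $\Omega(L^\bullet)\to A(X)$ inducing an isomorphism on cohomology, but that equivalence does not produce such a map: the object dual to $A(X)$ is $U(L)$ equipped with the filtration $W_{A(X)}$, and dualizing back returns $A(X)$ itself. The Chevalley--Eilenberg complex $\Omega(L^\bullet)$ is instead the dual of $U(L)$ with the \emph{order} filtration, which is explicitly noted in Subsection~\ref{dualitysubsection} to differ from $W_{A(X)}$. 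So there is no canonical comparison map handed to you by the duality, and asserting that one exists and is a quasi-isomorphism is precisely the content to be proved --- your closing sentence names this obstacle but does not resolve it. The paper's route is concrete: the QLA projections $f\colon (U(L),\epsilon,W_{A(X)})\to (U(L_i),\epsilon,\Q\oplus L_i)$ dualize to maps $g\colon(\Omega(L_i),d_i)\to(A(X),d)$; the bracket-length weight grading is put on everything in sight; Koszulity of $A(X)$ and of $U(L_i)$ identifies $H^*_j$ of each DGA with the weight-$j$ part of the corresponding $\Ext$ algebra; and a comparison of minimal graded free resolutions shows $f^*$ is an isomorphism in weights $j<i$, whence $g$ is a quasi-isomorphism in those weights and one passes to the limit $\Omega(L^\bullet)=\varprojlim_i\Omega(L_i)$. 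Some version of this truncation-and-weight argument (or an equivalent bar/cobar resolution argument carried out in detail) is needed to close your proof.
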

\begin{proof}
Since $(A(X),d)$ is a model for $X$, there is a quasi-isomorphism from the minimal model $(M(X),d)$ to $(A(X),d)$.
This gives a map on the degree-zero cohomology of their cobar complexes, $H^0_b(C_{A(X)})\to H^0_b(C_{M(X)})$, which induces an isomorphism on the associated graded quotients with respect to the rows' filtration on the complexes \cite[Lemma~3.3a]{romanbez}.
Thus, there is an isomorphism on the completions with respect to the row filtration, $\overline{H^0_b(C_{A(X)})}\cong \overline{H^0_b(C_{M(X)})}$.
Also by Proposition \ref{cobarprop}, $H^0_b(C_{A(X)})$ is the dual to $A(X)$, $U(L)$, and the rows' filtration is the filtration by the augmentation ideal.
Hence $\overline{H^0_b(C_{A(X)})}\cong \overline{U(L)}$.
Moreover, Proposition \ref{cobarprop2} says that $\overline{H^0_b(C_{M(X)})}\cong \overline{\Q[\pi_1X]}$, completing the proof of (1). 

Since the isomorphism in (1) respects the Hopf algebra structures, taking the primitive elements on each side yields the isomorphism in (2).

The projection $f: (U(L), \epsilon, W_{A(X)})\to (U(L_i), \epsilon, \Q \oplus L_i)$ is a map of QLAs, and so it induces a map on the dual DGAs $g:(\Omega(L_i),d_i)\to (A(X),d)$. The grading of $L$ by bracket length induces another grading on $U(L)$ and on $U(L_i)$ which we call weight, and the map $f$ is an isomorphism for weight $j<i$. The weight gradings on $U(L)$ and $U(L_i)$ also induce weight gradings on $\Ext_{U(L)}^*(\Q,\Q)$, $\Ext_{U(L_i)}^*(\Q,\Q)$, $(A(X), d)$, and $(\Omega(L_i), d_i)$. The differentials on $(A(X), d)$ and $(\Omega(L_i), d_i)$ preserve weight and hence we have a weight grading on $H^*(A(X),d)$ and $H^*(\Omega(L_i),d_i)$.
Consider the following diagram for weight $j<i$:
\begin{center}
\begin{tikzpicture}[scale=2]
\node at (0,0) (HA) {$H_j^*(A,d)$};
\node at (2,0) (Hi) {$H_j^*(\Omega(L_i),d_i)$};
\node at (0,1) (EU) {$\Ext_{U(L),j}^*(\Q,\Q)$};
\node at (2,1) (Ei) {$\Ext_{U(L_i),j}^*(\Q,\Q)$};
\draw[->] (Hi) --node[above]{$g^*$} (HA);
\draw[->] (Ei) --node[above]{$f^*$} (EU);
\draw[<->] (HA) -- (EU);
\draw[<->] (Hi) -- (Ei);
\end{tikzpicture}
\end{center}
Since $A(X)$ and $U(L_i)$ are Koszul, the maps on the right and left are both isomorphisms \cite[Lemma~3.2]{romanbez}. 
Since $U(L)$ and $U(L_i)$ agree for weight $j<i$, one can see that $f^*$ is an isomorphism for weight $j<i$ by comparing the minimal graded free resolutions of $\Q$ considered as a $U(L)$-module and as a $U(L_i)$-module. Thus, the map $g$ is a quasi-isomorphism for weight $j<i$. 
Since $\Omega(L^\bullet)=\varprojlim_i\Omega(L_i)$, we have a quasi-isomorphism from the standard complex of $L$ to $A(X)$. 
Moreover, $\Omega(L^\bullet)$ is minimal and is hence the minimal model of $A(X)$. 

Finally, notice that the minimal model $\Omega(L^\bullet)$ is generated in degree 1. By Proposition \ref{ratlhomotopyprop}, this happens exactly when the space is rationally $K(\pi,1)$. 
\end{proof}

\section{Topology of $X_\A$}\label{topologysection}

In this section, we will first review known results on the rational homotopy theory of linear arrangements. These results will apply to the toric case as well, and so we focus on proving the analogous results for abelian arrangements. 
In the projective case (with curves of positive genus), we compute the quadratic dual to the QDGA $(A(X_\A),d)$ for a chordal arrangement $\A$ and give a combinatorial presentation for the Lie algebra dual to $A(X_\A)$. 
This then gives us a combinatorial description of $\overline{\Q[\pi_1 X_\A]}$, the Malcev Lie algebra $L(\pi_1 X_\A)$, and the minimal model $(M(X_\A),d)$. 
Finally, we will show that $X_\A$ is a rational $K(\pi,1)$ space. 

\subsection{Linear and Toric Arrangements}

Let $\Gamma=(\V,\E)$ be a chordal graph, and let $\A=\A(\Gamma,\C)$. 
Papadima and Yuzvinsky \cite{papadimayuz} describe the \textbf{holonomy Lie algebra}, $L$, of $X_\A$ and show that it is the Lie algebra dual to the cohomology ring $H^*(X_\A)$. 
They also show that the standard complex of $L$ is the minimal model of $X_\A$ \cite[Propositions~3.1\&4.4]{papadimayuz}.
Moreover, Kohno \cite{kohno} shows that the holonomy Lie algebra is isomorphic to the Malcev Lie algebra $L(\pi_1 X_\A)$. 

This Lie algebra $L$ can be described as the free Lie algebra generated by $c_e$ for $e\in\E$ modulo the relations
\begin{enumerate}[(i)]
\item $[c_e,c_{e'}]=0$ if $e$ and $e'$ are not part of a cycle of size 3, and 
\item $[c_{e_1},c_{e_2}+c_{e_3}]=0$ if $\{e_1,e_2,e_3\}$ is a cycle.
\end{enumerate}

If $X$ is a formal space, then $H^*(X)$ is Koszul if and only if $X$ is rationally $K(\pi,1)$ \cite[Theorem~5.1]{papadimayuz}. In particular, $X_\A$ is a rational $K(\pi,1)$ space.
Falk first showed that $X_\A$ is a rational $K(\pi,1)$ space when studying the minimal model \cite[Proposition~4.6]{falk}, but the generality of Papadima and Yuzvinsky's arguments allows us to directly apply them to toric arrangements, giving the following result:
\begin{theorem}
Let $\Gamma=(\V,\E)$ be a chordal graph and  $\A=\A(\Gamma,\C^\times)$. 
\begin{enumerate}
\item The holonomy Lie algebra of $X_\A$ is the Lie algebra dual to $H^*(X_\A)$, denoted by $L=L(H^*(X_\A))$. 
\item The minimal model of $X_\A$ is the standard complex of $L$, $(\Omega(L^\bullet),d)$. 
\item $X_\A$ is a rational $K(\pi,1)$ space.
\end{enumerate}
\end{theorem}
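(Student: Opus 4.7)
The plan is to reduce the statement directly to Theorem \ref{liealgebrathm}. Since Theorem \ref{toriccohomology} tells us that $X_\A$ is formal, the cohomology ring $(H^*(X_\A),0)$ with trivial differential is a model for $X_\A$; and since Theorem \ref{koszuldgathmtoric} establishes that $H^*(X_\A)$ is Koszul, this model is in particular quadratic. So $(H^*(X_\A),0)$ serves as a Koszul quadratic model for $X_\A$, and Theorem \ref{liealgebrathm} applies.

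Next I would define $L := L(H^*(X_\A))$ as the Lie algebra dual to this model via the quadratic duality functor $D_{QDGA}$ of Section \ref{dualitysubsection}. For part (1), I would interpret the statement as giving a combinatorial description of the holonomy Lie algebra: the generators correspond to a basis of $H^1(X_\A)^*$, that is, one dual generator for each $x_v$ with $v\in\V$ and each $g_e$ with $e\in\E$, and the defining relations are read off from the quadratic part of the presentation in Theorem \ref{toriccohomology} by taking the orthogonal complement of the image of the cup product $H^1\otimes H^1\to H^2$. Koszulity is what guarantees that no higher-degree relations in the Orlik--Solomon-type presentation contribute to the definition of $L$, since the defining ideal is generated in degree two.

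Parts (2) and (3) then follow from parts (3) and (4) of Theorem \ref{liealgebrathm}, once one verifies the finiteness hypothesis that each $L_i := L/\Gamma_iL$ is finite-dimensional. This is immediate: $L$ has finitely many generators (one per vertex and one per edge of $\Gamma$), so each lower central quotient is spanned by finitely many iterated brackets of bounded length. Part (2) then identifies the inverse limit $\Omega(L^\bullet) = \varprojlim \Omega(L_i)$ with the minimal model of $X_\A$, and part (3) follows from the fact that this minimal model is generated in degree one.

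The main obstacle is conceptual rather than technical: one must be careful that the Lie algebra obtained via quadratic duality matches the chosen combinatorial definition of the ``holonomy Lie algebra'' in the toric setting, extending Papadima--Yuzvinsky's original definition from the linear case. This is why the authors remark that ``the generality of Papadima and Yuzvinsky's arguments allows us to directly apply them'' — once formality and Koszulity are in hand, no new homotopical input is required, and the entire theorem becomes a corollary of the general duality machinery in Section \ref{dualitysection}.
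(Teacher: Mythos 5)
Your proposal is correct and takes essentially the same route as the paper, which likewise derives all three statements from formality (Theorem \ref{toriccohomology}) and Koszulity (Theorem \ref{koszuldgathmtoric}) by invoking Papadima--Yuzvinsky's general arguments, of which Theorem \ref{liealgebrathm} is the paper's own generalization. One minor wording slip: the quadratic relations of the dual (holonomy) Lie algebra are the image of the transpose of the cup product, $(H^2)^*\to (H^1\otimes H^1)^*$ (equivalently the annihilator of its kernel), rather than the orthogonal complement of its image.
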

However, the presentation for the Lie algebra is much more complicated. 

\subsection{Abelian Arrangements and Higher Genus}

For this subsection, fix a projective curve $C$ of genus $g>0$ and a chordal graph $\Gamma=(\V,\E)$, and consider the chordal abelian arrangement $\A=\A(\Gamma,C)$. We will use quadratic-linear duality to study the rational homotopy theory of $X_\A$. 

Let $L$ be the free Lie algebra generated by $a_v^i,b_v^i,c_e$ for $v\in\V$,  $e\in\E$, and $i=1,\dots,g$ subject to the following relations:
\begin{enumerate}
\item[(i)] $[a_v^i,a_w^j]=[b_v^i,b_w^j]=0$ for $v,w\in \V$ with $v\neq w$,
\item[(iia)] $[b_{h(e)}^i,a_{t(e)}^i]=[b_{t(e)}^i,a_{h(e)}^i]=c_e$ for $e\in\E$,
\item[(iib)] $[a_v^i,b_w^j]=0$ if $v\neq w$ and there is no edge connecting $v$ and $w$, or if $i\neq j$,
\item[(iic)] $\displaystyle\sum_{i=1}^g[a_v^i,b_v^i]=\displaystyle\sum_{\substack{h(e)=v\\ \text{or }t(e)=v}}\!\!\!\!\! c_e$ for $v\in\V$,
\item[(iiia)] $[a_v^i,c_e]=[b_v^i,c_e]=0$ for $e\in \E$ and $h(e)\neq v\neq t(e)$, 
\item[(iiib)] $[a_{h(e)}^i+a_{t(e)}^i,c_e]=[b_{h(e)}^i+b_{t(e)}^i,c_e]=0$ for $e\in\E$,
\item[(iva)] 
$[c_{e},c_{e'}]=0$ whenever $e$ and $e'$ are not part of a 3-cycle, and
\item[(ivb)] $[c_{e_1},c_{e_2}+c_{e_3}]=0$ whenever $\{e_1,e_2,e_3\}$ is a cycle.
\end{enumerate}

The following theorem generalizes the main theorem of \cite{romanbez}.
Using the Lie algebra dual to $A(\A)$, this theorem gives a description of the Malcev Lie algebra of $X_\A$ when $\A$ is chordal. 

\begin{theorem}
\label{dualitythm}
Let $\Gamma=(\V,\E)$ be a chordal graph, $\A=\A(\Gamma,C)$, and $L$ be the Lie algebra described above. 
Then we have the following:
\begin{enumerate}
\item Consider the universal enveloping algebra $U(L)$ as a QLA whose first filtered piece is spanned by $a^i_v,b^i_v,c_e$ for $v\in \V$ and $e\in\E$. Then $U(L)$ is a Koszul QLA which is the nonhomogeneous quadratic dual to the Koszul DGA $A(\A)$.
\item $\overline{U(L)}\cong\overline{\Q[\pi_1(X_\A)]}$, where the completions are each with respect to the augmentation ideal. This isomorphism respects the Hopf algebra structures.
\item $\overline{L}\cong L(\pi_1(X_\A))$, where the completion of $L$ is with respect to the filtration by bracket length.
\end{enumerate}
\end{theorem}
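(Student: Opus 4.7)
My plan is to prove (1) by direct computation of the nonhomogeneous quadratic dual, and then to deduce (2) and (3) from Theorem \ref{liealgebrathm}. The hypotheses of that theorem require $A(\A)$ to be a quadratic Koszul model for $X_\A$: it is a model by Theorems \ref{modelthm} and \ref{modelthm2}; it is generated in degree one with quadratic relations after the chordality reduction carried out in Subsections \ref{abeliankoszulity} and \ref{highergenuskoszulity}; and it is Koszul by Theorems \ref{koszuldgathmabelian} and \ref{koszuldgathmhighergenus}. Once (1) is established, parts (2) and (3) are immediate from Theorem \ref{liealgebrathm}(1)--(2).

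For (1), I apply the functor $D_{QDGA}$ of Subsection \ref{dualitysubsection} to $A(\A)$. Set $V = A(\A)^1$, with basis $\{x_v^i, y_v^i, g_e\}$, and identify $V^* = \mathrm{span}\{a_v^i, b_v^i, c_e\}$ via the dual basis. From the presentation I read off the quadratic relation space $J \subseteq V\otimes V$: graded-commutativity, the type-(ii) relations $(x_{h(e)}^i - x_{t(e)}^i) g_e$ and $(y_{h(e)}^i - y_{t(e)}^i) g_e$, the relations (iiia) and (iiib) of Theorem \ref{modelthm2}, and the quadratic Orlik--Solomon relations (whose reduction to quadratic form uses chordality as in Subsection \ref{linearkoszulity}). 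Since $S^2(V) \subseteq J$, we have $J^\perp \subseteq \Lambda^2(V^*)$, so the quotient $D_{QDGA}(A(\A)) = T(V^*)/\langle x - \phi(x) : x \in J^\perp \rangle$ with $\phi = (d|_V)^*$ is the universal enveloping algebra of some Lie algebra, equipped with the filtration $W_{A(\A)} = \Q \cdot 1 \oplus V^*$.

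The central task is then to match this presentation, family by family, with the given presentation of $L$. I claim that the relations (i), (iib), and (iic) of $L$ come from duals of the graded-commutativity of $A(\A)$ together with its relations (iiia)--(iiib); the linear terms $c_e$ in the relations (iia) come from $\phi$ applied to the appropriate antisymmetrizations $b^i_{h(e)} \wedge a^i_{t(e)}$ and $b^i_{t(e)} \wedge a^i_{h(e)}$, produced by the formula for $dg_e$ in Theorem \ref{modelthm2}; the relations (iiia) and (iiib) of $L$ come from the type-(ii) relations of $A(\A)$; and the relations (iva) and (ivb) come from the quadratic Orlik--Solomon relations, exactly as for the holonomy Lie algebra in the linear case of \cite{papadimayuz}. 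Once the identification $D_{QDGA}(A(\A)) \cong U(L)$ is verified, Koszulity of $U(L)$ is inherited from that of $A(\A)$ via the contravariant equivalence between $\mathtt{KLA}$ and $\mathtt{KDGA}$.

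The hardest part of the argument will be the bookkeeping in the higher-genus case, where the genus-$g$ sum in $dg_e$ interacts with the ``Hodge-type'' relations (iiia)--(iiib) of Theorem \ref{modelthm2} to produce the relations (iia) and (iic) of $L$, and where sign conventions in the functor $D_{QDGA}$ must be tracked to recover (iib) exactly. None of this is conceptually new beyond Bezrukavnikov's treatment of the braid case in \cite{romanbez}, but the number of moving parts is larger. Once (1) is in hand, parts (2) and (3) follow directly by specializing Theorem \ref{liealgebrathm}.
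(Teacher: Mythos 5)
Your proposal follows essentially the same route as the paper: apply $D_{QDGA}$ to the (quadratic, by chordality) Koszul DGA $A(\A)$, identify the resulting QLA with $U(L)$ relation family by relation family, inherit Koszulity through the $\mathtt{KLA}$--$\mathtt{KDGA}$ equivalence, and obtain (2) and (3) from Theorem \ref{liealgebrathm}. The only organizational difference is that the paper parametrizes the relations of $L$ by the quadratic basis monomials of $A(\A)^2$ via the isomorphism $I^\perp \cong ((V\otimes V)/I)^*$ (using Lemma \ref{basislemmahighergenus}) rather than by the relations of $A(\A)$ directly, which makes the family-by-family bookkeeping you describe slightly cleaner but is not a substantive divergence.
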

\begin{proof}
(1) We can identify the QLA dual to $A(\A)$ with $L$ as follows. Let $a_v^i$, $b_v^i$, $c_e$ be the dual basis to $x_v^i$, $y_v^i$, $g_e$. The relations in the quadratic dual correspond to quadratic elements of the basis from Lemma \ref{basislemmahighergenus} since there is a natural isomorphism $\phi: I^\perp \cong ((V \otimes V)/I)^*$. The four types of relations (i)-(iv) in the presentation for $L$ come from four types of basis elements for $(V \otimes V)/I$ :
\begin{enumerate}
\item[(i)] $x_v^ix_w^j$ or $y_v^iy_w^j$ for $v\neq w$,
\item[(ii)] $x_v^iy_w^j$ for $v\neq w$ or $v=w$ and either $i\neq j$ or $i=j=1$,
\item[(iii)] $x_v^ig_e$ or $y_v^ig_e$ for $v\neq h(e)$,
\item[(iv)] $g_{e_1}g_{e_2}$ for $\{e_1,e_2\}$ not a broken circuit).
\end{enumerate}
The further subtypes in the relations for $L$ arise when computing $\phi^{-1}$. Since $A(\A)$ is Koszul by Theorem \ref{koszuldgathmabelian} or \ref{koszuldgathmhighergenus}, $U(L)$ is also Koszul.

Statements (2) and (3) follow from Theorem \ref{liealgebrathm}. 
\end{proof}

Since $A(\A)$ is a Koszul model for $X_\A$, Theorem \ref{liealgebrathm} gives us the following proposition and corollary, which describes the minimal model of $X_\A$ and shows that $X_\A$ is rationally $K(\pi,1)$. 

\begin{proposition}\label{minimalmodel}
Let $C$ be a complex projective curve of genus $g\geq1$, $\Gamma=(\V,\E)$  a chordal graph, $\A=\A(\Gamma,C)$, and $L$ be the Lie algebra described above. 
Consider $L^\bullet$ with the grading by bracket length. Then the standard complex $(\Omega(L^\bullet),d)$ is the minimal model for $X_\A$. 
\end{proposition}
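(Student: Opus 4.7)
The plan is to invoke Theorem \ref{liealgebrathm}(3) directly with $X = X_\A$, the DGA $(A(\A),d)$, and the Lie algebra $L$ presented in Theorem \ref{dualitythm}. That theorem's hypotheses are threefold: $A(X)$ must be a Koszul model for $X$; $L$ must be the Lie algebra dual to $A(X)$; and the nilpotent quotients $L_i := L/\Gamma_i L$ must be finite dimensional. Once these are verified, the conclusion is immediate.

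The first two hypotheses are already in hand from earlier in the paper. Theorem \ref{modelthm} (for $g=1$) and Theorem \ref{modelthm2} (for $g>1$) establish that $(A(\A),d)$ is a model for $X_\A$; Theorems \ref{koszuldgathmabelian} and \ref{koszuldgathmhighergenus} show that the underlying graded algebra of $A(\A)$ is Koszul; and Theorem \ref{dualitythm}(1) identifies $U(L)$ with the nonhomogeneous quadratic dual of $A(\A)$, so that $L$ is by construction the Lie algebra dual to $A(\A)$ in the sense of Subsection \ref{dualitysubsection}.

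The one thing genuinely left to check is finite-dimensionality of each $L_i$, and this is essentially a counting argument. The Lie algebra $L$ is generated by the finite set $\{a_v^i, b_v^i, c_e \mid v \in \V,\ e \in \E,\ i=1,\dots,g\}$, all in filtered bracket-length $1$. Although relations (iia) and (iic) are not homogeneous in bracket length (so $L$ is only filtered, not graded, by bracket length), the associated graded $L^\bullet$ is finitely generated in degree $1$, and hence each graded piece $\gr^j L$ is a subquotient of the span of length-$j$ iterated brackets of finitely many generators, which is finite dimensional. The equality $\dim L/\Gamma_i L = \sum_{j<i} \dim \gr^j L$ then gives $\dim L_i < \infty$.

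I do not expect any serious obstacle in this argument; all of the technical heavy lifting (Koszulity, nonhomogeneous quadratic duality, and the construction of the minimal model via $\Omega(L^\bullet)$) has already been done in Sections \ref{koszulitysection} and \ref{dualitysection}. The only real care needed is bookkeeping: keeping the filtered Lie algebra $L$ of Theorem \ref{dualitythm} distinct from its associated graded $L^\bullet$, which is the object whose Chevalley--Eilenberg complex actually appears in the conclusion. Once that distinction is firmly in place, Theorem \ref{liealgebrathm}(3) hands us $(\Omega(L^\bullet),d) \simeq M(X_\A)$ directly.
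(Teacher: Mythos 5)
Your overall strategy---verify the hypotheses of Theorem \ref{liealgebrathm}(3) for the Koszul model $(A(\A),d)$ and the Lie algebra $L$ of Theorem \ref{dualitythm}---is exactly what the paper does (the paper gives no separate proof; it simply cites Theorem \ref{liealgebrathm} after noting that $A(\A)$ is a Koszul model). Your finite-dimensionality check for $L_i=L/\Gamma_iL$ via finite generation is also fine.

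However, your resolution of the graded-versus-filtered issue is backwards, and this is not harmless bookkeeping. You assert that $L$ is only filtered by bracket length and that $\Omega$ should be applied to the associated graded $L^\bullet$ of that filtration. If you take $\gr$ of the QLA filtration in which $a_v^i$, $b_v^i$, $c_e$ all sit in degree $1$, relation (iia) degenerates to its top-degree part $[b_{h(e)}^i,a_{t(e)}^i]=0$, so the associated graded is the quadratic Lie algebra $L^{(0)}$ dual to the underlying graded algebra of $A(\A)$ with zero differential. Its standard complex has $dg_e=0$; its first cohomology contains the classes dual to the $c_e$, and its cohomology ring is the underlying algebra of $A(\A)$ rather than $H^*(X_\A)$. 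That is the minimal model of a \emph{formal} space and cannot be $M(X_\A)$, precisely because $X_\A$ is not formal. Compare the example at the end of Section \ref{topologysection}, where $d_3g_{ij}=(x_i-x_j)(y_i-y_j)\neq 0$: that term is dual to the bracket relation $[b,a]=c$, which your associated graded kills. The correct reading is that $c_e$ is a redundant generator, equal by (iia) to $[b_{h(e)}^i,a_{t(e)}^i]$ and hence of bracket length $2$; with $\deg a_v^i=\deg b_v^i=1$ and $\deg c_e=2$ every relation (i)--(ivb) becomes homogeneous, so $L$ itself is graded by bracket length, $L^\bullet$ denotes $L$ with this grading (equivalently $\oplus_j\Gamma_jL/\Gamma_{j+1}L$), and the complex in the conclusion is $\Omega(L^\bullet)=\varprojlim_i\Omega(L/\Gamma_iL)$, built from the honest nilpotent quotients of $L$. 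With that correction the rest of your argument goes through.
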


\begin{corollary}\label{kpo}
Let $C$ be a complex projective curve of genus $g\geq1$, and let $\A=\A(\Gamma,C)$ be a chordal arrangement. Then its complement $X_\A$ is a rational $K(\pi,1)$ space.
\end{corollary}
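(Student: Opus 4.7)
The plan is to apply Theorem \ref{liealgebrathm}(4) directly to the Koszul quadratic model $A(\A)$. Both hypotheses required by that theorem are already in hand from earlier sections of the paper: $A(\A)$ is a model for $X_\A$ by Theorem \ref{modelthm} in the elliptic case and Theorem \ref{modelthm2} in the higher genus case, and its underlying algebra is Koszul by Theorem \ref{koszuldgathmabelian} or \ref{koszuldgathmhighergenus}. The Lie algebra dual to $A(\A)$ is identified by Theorem \ref{dualitythm}(1) as the finitely presented Lie algebra $L$ on generators $a_v^i, b_v^i, c_e$ with relations (i)--(iv).

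The one remaining point is to verify the grading hypothesis. Every defining relation of $L$ is homogeneous in bracket length -- each relation is a sum of brackets of a single fixed length (length one for a bracket simply set to zero, length two for the Jacobi-type identities) -- so $L$ inherits a bracket-length grading, and setting $L^\bullet$ accordingly produces $L_i := L/\Gamma_i L$. Each $L_i$ is finite dimensional because $L$ is finitely generated ($\V$ and $\E$ being finite), and hence is a quotient of the free Lie algebra on finitely many generators, whose lower central quotients are finite dimensional; this property is inherited by quotients. The hypotheses of Theorem \ref{liealgebrathm}(3)--(4) are therefore satisfied, yielding the corollary.

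No serious obstacle arises; this is a bookkeeping application once Proposition \ref{minimalmodel} is in hand. Equivalently, Proposition \ref{minimalmodel} presents the minimal model $(\Omega(L^\bullet),d)$, which is freely generated in degree one by $L^*$, and the rational $K(\pi,1)$ property then follows from Theorem \ref{ratlhomotopyprop}, since the identification $\pi_q \Q_\infty(X_\A) \cong \Hom_{\Q}(\pi^q M(X_\A),\Q)$ forces $\pi_q \Q_\infty(X_\A)$ to vanish for $q \geq 2$ once the minimal model has no generators above degree one. The genuinely nontrivial content has already been packaged into the Koszulity results of Section \ref{koszulitysection} and the duality calculation of Theorem \ref{dualitythm}.
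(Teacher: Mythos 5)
Your overall route is exactly the paper's: both the corollary and Proposition \ref{minimalmodel} are read off from Theorem \ref{liealgebrathm}(3)--(4) applied to the Koszul model $A(\A)$, with the Lie algebra $L$ supplied by Theorem \ref{dualitythm}(1), and the final step (no generators of the minimal model above degree one, hence $\pi_q\Q_\infty(X_\A)=0$ for $q\geq 2$ by Theorem \ref{ratlhomotopyprop}) is precisely how the paper closes the proof of Theorem \ref{liealgebrathm}(4).

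The one place where your verification goes wrong is the claim that ``every defining relation of $L$ is homogeneous in bracket length,'' with all generators $a_v^i,b_v^i,c_e$ implicitly placed in length one. That is false: relations (iia) and (iic) equate length-two brackets such as $[b_{h(e)}^i,a_{t(e)}^i]$ with the generators $c_e$, which is exactly the inhomogeneous, quadratic-linear phenomenon the paper is built around (and your parenthetical taxonomy of the relations does not match the actual list (i)--(ivb), which contains no Jacobi-type identities and does contain length-three and length-four brackets like $[a_v^i,c_e]$ and $[c_e,c_{e'}]$). The grading hypothesis of Theorem \ref{liealgebrathm}(3) is nevertheless satisfied, but for a different reason: relation (iia) exhibits $c_e$ as a bracket of the degree-one generators $a_v^i,b_v^i$, so one assigns $c_e$ bracket length two, after which every relation in the list becomes homogeneous (degree $2$ for (i)--(iic), degree $3$ for (iiia)--(iiib), degree $4$ for (iva)--(ivb)) and $L$ is graded with $L$ generated in degree one by the $a_v^i,b_v^i$ alone. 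With that correction your finite-dimensionality argument for $L/\Gamma_iL$ and the rest of the deduction go through as written.
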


\begin{remark}
Not only is $X_\A$ rationally $K(\pi,1)$, but it is not hard to show that $X_\A$ is also $K(\pi,1)$.
As an easy case, a punctured projective curve is homotopic to a wedge of circles and hence is $K(\pi,1)$.
Then by induction on $|\V|$ and using the long exact sequence in homotopy of a fibration, one can show if that if $\Gamma=(\V,\E)$ is chordal, then the complement to $\A(\Gamma-v,E)$ is $K(\pi,1)$.
The fibration arises as the restriction of the projection $C^\V\to C^{\V-v}$ to $X_{\A(\Gamma,C)}\to X_{\A(\Gamma-v,C)}$, where $v\in V$ is the maximum vertex in our perfect elimination ordering. The fiber of this fiber bundle is homeomorphic to $C\setminus\{k\text{ points}\}$ where $k=|\E\setminus(\E-v)|$.
\end{remark}

\begin{remark}
The fact that chordal arrangements are rationally $K(\pi,1)$ gives us a class of examples of abelian arrangements which are not formal. If we did have formality, then Theorem 5.1 of \cite{papadimayuz} would imply that the cohomology ring is Koszul. However, if the arrangement is chordal and has at least one cycle, the cohomology ring is not even generated in degree one and hence cannot be Koszul. 
\end{remark}

We end with an example computation of the first few terms $\Omega(L/\Gamma_iL)$ of the minimal model $\varprojlim_i\Omega(L/\Gamma_iL)$ for the complement of the elliptic braid arrangement of type $A_2$.

\begin{example}
Consider the case of an elliptic curve. The braid arrangement of type $A_2$ corresponds to the complete graph $\Gamma$ on three vertices  $\V=\{1<2<3\}$ with edges labeled $\{12,13, 23\}$. 

The DGA $A(\A)$ is the quotient of the exterior algebra $\Lambda(x_v,y_v,g_e)$ by the ideal generated by 
\begin{enumerate}[(i)]
\item $(x_i-x_j)g_{ij}$, $(y_i-y_j)g_{ij}$
\item $g_{12}g_{13}-g_{12}g_{23}+g_{13}g_{23}$
\end{enumerate}
with differential $dg_{ij}=(x_i-x_j)(y_i-y_j)$.

Recall from the proof of Theorem \ref{liealgebrathm} that the bracket-length defines another grading on $U(L)$, which also gives another grading on $A(\A)$ (by assigning the ``weight'' of a generator of $A(\A)$ to be the bracket length of its dual in $U(L)$). Notice that there is a quasi-isomorphism up to weight less than $i$ between $\Omega(L/\Gamma_iL)$ and $A(\A)$, for each $i$, which in the limit induces the quasi-isomorphism between $\Omega(L)$ and $A(\A)$. 

\begin{enumerate}
\item
$L/\Gamma_1L=0$ and hence $\Omega_1=\Q$, which is isomorphic to the weight-0 part of $A(\A)$. 
\item 
$L/\Gamma_2L$ is the vector space generated by $a_v$ and $b_v$ so that $\Omega_1=\Lambda(x_v,y_v)$ with differential $d_2=0$. This is isomorphic to the weight$\leq1$ part of $A(\A)$. 
\item
$\Omega(L/\Gamma_3L)=\Lambda(x_v,y_v,g_e)$ with differential $d_3:g_{ij}\mapsto(x_i-x_j)(y_i-y_j)$, which matches $A(\A)$ up to weight 2. 
\item $\Omega(L/\Gamma_4L)=\Lambda(x_v,y_v,g_e,k_{e,a},k_{e,b})$ where 
$$k_{e_a}:=[a_{h(e)},c_e]^*=-[a_{t(e)},c_e]^*$$ and $$k_{e,b}:=[b_{h(e)},c_e]^*=-[b_{t(e)},c_e]^*.$$ 
The differential $d_4$ restricts to $d_3$ on the subalgebra $\Omega(L/L^{(3)})$, and $d_4k_{e,a}=(x_{h(e)}-x_{t(e)})g_e$ and $d_4k_{e,b}=(y_{h(e)}-y_{t(e)})g_e$.
\item $\Omega(L/\Gamma_5L)=\Lambda(x_v,y_v,g_e,k_{e,a},k_{e,b},k_{e,aa},k_{e,bb},k_{e,ab},k_C)$ where 
$$k_{e,aa}:=[a_{h(e)},[a_{h(e)},c_e]]^*=-[a_{t(e)},[a_{h(e)},c_e]]^*,$$ 
$$k_{e,bb}:=[b_{h(e)},[b_{h(e)},c_e]]^*=-[b_{t(e)},[b_{h(e)},c_e]]^*,$$
$$k_{e,ab}:=[a_{h(e)},[b_{h(e)},c_e]]^*$$
and $$k_C=[c_{e_1},c_{e_2}]^*=-[c_{e_1},c_{e_3}]^*=[c_{e_2},c_{e_3}]^*$$
whenever $\{e_1,e_2,e_3\}$ is a cycle. The differential is defined by $$d_5k_{e,aa}=(x_{h(e)}-x_{t(e)})k_{e,a},$$ 
$$d_5k_{e,bb}=(y_{h(e)}-y_{t(e)})k_{e,b},$$ 
$$d_5k_{e,ab}=(x_{h(e)}-x_{t(e)})k_{e,b}+(y_{h(e)}-y_{t(e)})k_{e,a},$$ and $$d_5k_C=g_{e_1}g_{e_2}-g_{e_1}g_{e_3}+g_{e_2}g_{e_3}.$$
\end{enumerate}
\end{example}

\bibliographystyle{alpha}
\bibliography{qldrhtca}

\end{document}